\def\ETA{\eta}
\def\KK{K}
\def\kk{\mathfrak k}
\def\hh{\mathfrak h}
\def\HH{H}
\def\gg{g}
\def\cc{\zeta}
\def\GG{G}
\def\NN{N}
\def\empha{\em}
\def\bemphas{}
\def\pemphas{}
\def\fra{\mathfrak}
\newtheorem{thm}{Theorem}[section]
\newtheorem{cor}[thm]{Corollary}
\newtheorem{prop}[thm]{Proposition}
\long
\def\MSC#1\EndMSC{\def\arg{#1}\ifx\arg\empty\relax\else
      {\par\narrower\noindent
      2000 Mathematics Subject Classification. #1\par}\fi}
\long
\def\KEY#1\EndKEY{\def\arg{#1}\ifx\arg\empty\relax\else
    {\par\narrower\noindent
      Keywords and Phrases: #1\par}\fi}
\title
{Line bundles on moduli and related spaces}
\author{Johannes Huebschmann \\
Universit\'e des
Sciences et Technologies de Lille\\ UFR de Math\'ematiques, CNRS-UMR 8524
\\ 59655 VILLENEUVE D'ASCQ, C\'edex, France
\\
{Johannes.Huebschmann@math.univ-lille1.fr} }
\numberwithin{equation}{section}
\begin{document}
\setcounter{page}{1}

\maketitle

\begin{abstract}
\noindent
Let $\GG$ be a Lie goup, let
$M$ and $N$ be smooth connected $\GG$-manifolds, let $f \colon M \to
N$ be a smooth $\GG$-map,  and let $P_f$ denote the {\em fiber\/} of
$f$. Given a closed and equivariantly closed relative 2-form for $f$
with integral periods, 
we  construct the principal $\GG$-circle bundles 
with connection on $P_f$ having the given relative
2-form as curvature.
Given a compact Lie group $\KK$,
a biinvariant Riemannian metric on $\KK$,
 and a closed Riemann surface $\Sigma$
of genus $\ell$, when we apply the construction to the particular case where
$f$ is the familiar relator map from $\KK^{2\ell}$ to $\KK$,
which sends the $2\ell$-tuple $(a_1,b_1,\ldots,a_{\ell},b_{\ell})$
of elements $a_j,b_j$ of $\KK$ to $\prod[a_j,b_j]$,
we obtain the principal $\KK$-circle bundles
on the associated extended moduli spaces which, via reduction, then
yield the corresponding line bundles on possibly twisted moduli spaces
of representations of $\pi_1(\Sigma)$ in $\KK$, in particular,
on moduli spaces of semistable holomorphic vector bundles or, more
precisely, on a smooth open stratum when the moduli space is not smooth.
The construction also yields an alternative geometric object,
distinct from the familiar gerbe,
representing the fundamental class in the third integral cohomology group of
$\KK$ or, equivalently, the first Pontrjagin class of the classifying space
of $\KK$.
\end{abstract}

\MSC 

\noindent
Primary: 53D30 

\noindent
Secondary: 14D21 14H60 53D17 53D20 53D50 55N91 55R91 57S25 58D27 81T13
\EndMSC

\KEY Moduli space of central Yang-Mills connections,
moduli space of twisted representations of the fundamental group
of a surface, 
moduli space of semistable holomorphic vector bundles,
symplectic structure of moduli space,
line bundle on moduli space, 
equivariant line bundle,
geometric object representing the first Pontragin class,
symplectic reduction

 \EndKEY

\section{Introduction}

Let $G$ be a Lie group with a bi-invariant Riemannian metric.
Moduli spaces of homomorphisms or more generally twisted
homomorphisms from the fundamental group of a surface to $G$ were
connected with geometry through their identification with moduli
spaces of holomorphic vector bundles \cite{narasesh}. Atiyah and
Bott \cite{atibottw} initiated a new approach to the study of
these moduli spaces by identifying them with moduli spaces of
projectively flat constant central curvature connections on
principal bundles over Riemann surfaces, which they analyzed by
methods of gauge theory. A proof that the resulting symplectic
form is closed, using group cohomology rather than gauge theory,
was given by Karshon \cite{karshone}; in \cite{weinsthi}, A.
Weinstein reformulated Karhon's proof in terms of the double
complex of Bott \cite{bottone} and Shulman \cite{shulmone}. A
purely finite dimensional construction, including that of an
ordinary finite-dimensional Hamiltonian $G$-space, referred to as
an {\em extended moduli space\/}, from which the moduli spaces
arise by ordinary finite-dimensional symplectic reduction, was
announed in \cite{huebjeff} and given in \cite{modus} and
\cite{jeffrtwo}. The construction has been extended in
\cite{guhujewe} to include surfaces with boundary and parabolic
structures along the boundary,
and in \cite{kan} the entire approach has been pushed further 
to handle an arbitrary gauge theory situation
in terms of a similar construction. An application
of the construction in \cite{kan} is a purely combinatorial
construction of the Chern-Simons function over a 3-manifold.

The symplectic structure of the moduli space
(more precisely: on the smooth stratum thereof)
is known to be integral and, at the time,
A. Weinstein raised the issue of constructing a corresponding line
bundle or, equivalently, principal circle bundle. In this paper,
we present a solution to this problem. More precisely, the line
bundle or principal circle bundle not necessarily being defined on
the moduli space itself, we shall construct the requisite
$G$-equivariant circle bundle on the extended moduli space. We
shall actually abstract from the particular case and explore the
more general case of a $G$-equivariant smooth map $f \colon M \to
N$, together with (i) a closed $G$-equivariant relative 2-form 
$(\zeta,\lambda)$ with
integral periods where $\zeta$ is a $\GG$-invariant
2-form on $M$ and $\lambda$
a $\GG$-invariant 3-form on $N$ such that $d \zeta=f^*\lambda$ and with
(ii) the requisite additional technical ingredient
encapsulating the information
to carry out the construction of the principal circle bundle
$\GG$-equivariantly; this additional information
is encoded in a $\GG$-equivariant 
linear map $\vartheta$ from the Lie algebra $\fra \gg$ of $\GG$ to the space
of 1-forms on $N$ and contains the information needed to construct
a $\GG$-momentum mapping from $P_f$ to $\fra \gg^*$,
that momentum mapping being
the additional constituent to arrive at an equivariantly closed 2-form.

Let $I$ denote the unit interval, let $I^2$ be the ordinary unit
square, and let $j_1\colon I \to I^2$ be the injection which sends
the point $t$ of $I$ to $(t,0)\in I^2$. We shall construct the
total space of the circle bundle on the fiber $P_f$ of the map $f$
as a space of equivalence classes of {\em strings\/} of the kind
\begin{equation*}
\CD \{0\} @>>>I @>{j_1}>> I^2
\\
@VVV @VVwV @VV{\phi}V
\\
\{o\} @>>> M @>f>> N.
\endCD
\end{equation*}
In Section \ref{sect1} below,
we recall how a principal circle bundle can be recovered
from the curvature;
thereafter We  refine the construction to an equivariant one.
In Section \ref{fiber} we generalize the construction to that of a
principal circle bundle on the fiber $P_f$ of a map
$f\colon M \to N$ and in Section \ref{equiv} we give the equivariant
extension of that construction.
In Section \ref{liegp} we explore the special case where the target $N$
of $f$ is a Lie group.

Given a closed Riemann surface $\Sigma$ of genus $\ell$ and a compact 
connected Lie group $\KK$,
we take $M= \KK^{2\ell}$, $N= \KK$, and 
$f$ to be the familiar relator map $\KK^{2\ell} \to \KK$
which sends the $2\ell$-tuple $(a_1,b_1,\ldots,a_{\ell},b_{\ell})$
of elements $a_j,b_j$ of $\KK$ to $\prod[a_j,b_j]$;
moreover we choose a biinvariant Riemannian metric on $\KK$ and
take $\lambda$ to be the fundamental 3-form on $\KK$ and $\zeta$ and
$\vartheta$ the corresponding forms explored in 
\cite{guhujewe},
\cite{modus}, 
\cite{huebjeff},
\cite{karshone},
\cite{weinsthi}.
In that particular case the construction yields a $\KK$-equivariant
principal circle bundle on the fiber of the relator map.

Via the holonomy, the fiber of the relator map is actually
based homotopy equivalent to the space
$\mathrm{Map}^o(\Sigma,B\KK)$ of based maps from $\Sigma$ to the classifying
space $B\KK$ of $\KK$.
Each path component of $\mathrm{Map}^o(\Sigma,B\KK)$ corresponds to
a principal $\KK$-bundle on $\Sigma$ and in fact amounts to
the classifying space of the associated group of based gauge
transformations.
Thus the $\KK$-equivariant principal circle bundle
on $P_f$ induces a $\KK$-equivariant principal circle bundle
on $\mathrm{Map}^o(\Sigma,B\KK)$.
This association can be made functorial in terms of geometric presentations
of the surface variable $\Sigma$;
the  geometric object which  thereby results
represents the cohomology class given by the Cartan
3-form and may thus be viewed as an alternative to the familiar
equivariant gerbe representing the first Pontrjagin class of the
classifying space of $\KK$ \cite{brymcboo}. We explain the details
in Section \ref{pont} below.

The extended moduli spaces lie $\KK$-equivariantly in the fiber
of the relator map, and the $\KK$-equivariant principal circle bundles 
on the extended moduli space we are
looking for are then simply obtained by restriction. 
Details are givem in Section \ref{moduli} below.
In a final section we illustrate our method 
in terms of equivariant circle bundles on coadjoint orbits of the loop group.

The approach in the present paper can be extended to a
construction of principal circle bundles in the more general
situation of {\em equivariant plots\/} for an arbitrary gauge
theory situation of the kind developed in \cite{kan}. An extended
moduli space is a special case of such an equivariant plot. We
plan to come back to this situation elsewhere.

\section{Reconstruction of a circle bundle from the curvature}
\label{sect1}

Our main aim is the construction of principal circle bundles on
the fiber of a map. In this section, we will explain the essence
of the construction, but just over a space rather than over the
fiber of a map. This will help understand the subsequent construction
over the fiber of a map.

Let $I$ be the unit interval and let $\NN$ be a topological space
having suitable local properties so that the constructions below
make sense---$\NN$ being a CW-complex will certainly suffice. By a
{\em path\/} in $\NN$ we mean a (continuous) map $u \colon I \to
\NN$ as usual; then $u(0)$ is the {\em starting point\/} and
$u(1)$ the {\em end point\/}. Occasionally we will refer to both
the starting and end point as {\em end points\/}. Let $o$ be a
point of $\NN$, taken henceforth as {\em base point\/}, let
$P_o(\NN)$ be the space of paths in $\NN$ having starting point
$o$, and let $p_o\colon P_o(\NN) \to \NN$ be the obvious
projection which sends a path to its end point,
well known to be a (Hurewicz) fibration 
onto the path component of $o$ having as fiber
$p_o^{-1}(o)$ the space 
$\Omega_o(\NN)$  of closed based loops in $\NN$, based at $o$.
The space
$P_o(\NN)$, topologized as usual by the compact-open topology, is
contractible, the standard contraction being given by the
operation of contracting a path to its starting point.
A familiar construction yields the universal cover $\widetilde \NN$
of $N$: Identify
two paths $w_1$ and $w_2$ having $o$ as starting point and having
the same end point provided these paths are homotopic relative to
the starting and end points. The space of equivalence classes,
suitably topologized, yields the universal cover of $\NN$, the
covering projection being the obvious map which sends a homotopy
class of paths to the common end point. We will now recall how a
variant of this construction yields the principal circle bundles
on $\NN$.

\subsection{The topological construction}

We will use the notation $B^I=\mathrm{Map}(I,B)$.
Let $p \colon E \to B$ be a map, let $p_0\colon B^I \to B$ be the map 
which sends a path $u \colon I \to B$ to its starting point $u(0)$,
let $E \times_B B^I$ be the associated fiber product, and let
$p^I\colon E^I \to E \times_B B^I$ be the obvious map which sends a path
$w\colon I \to E$ to the pair $(w(0), p \circ w)$.
Recall
that $p$ is a Hurewicz fibration if and only if it
admits a {\em lifting function\/} $\lambda\colon E \times_B B^I \to E^I$,
that is to say, a function $\lambda$ that is
required to be a right-inverse for $p^I$, so that
$p^I \circ \lambda$ is the identity of $E \times_B B^I$;
cf. e.~g. \cite{spanier} (Chap.
2, Theorem 8 p.~92). 
Pick a base point $o$ of $B$ and a base point $o$
of $E$ such that $p(o)=o$, where the notation $o$ is slightly abused.
This choice of base points induces an injection 
$j_o\colon P_o(B) \to E \times_B B^I$ in an obvious manner.
Given the lifting function
$\lambda\colon E \times_B B^I \to E^I$ for $p$ so that, in particular,
$p \colon E \to B$ is a fibration,
the composite
\begin{equation}
\begin{CD}
\gamma\colon P_o(B) @>{j_o}>> E \times_B B^I
@>{\lambda}>> E^I @>{p_o}>> E
\end{CD}
\end{equation}
is a map over $B$ and hence a morphism of fibrations.

Let $S^1$ be the circle group and let $\tau
\colon S \to \NN$ be a topological principal circle bundle. 
Choose a lifting function for $\tau$ and 
pick a pre-image $o$ in $S$ of $o$.
The above construction 
yields a map $\gamma_o\colon
\Omega_o(\NN)\to S^1$ which is a homomorphism relative to
composition of loops, and we will refer to $\gamma_o$ as the {\em
topological holonomy\/} of $\tau$ determined by the lifting
function. The topological holonomy $\gamma_o$ represents an integral
class in $\mathrm H^1(\Omega_o(\NN))$. For dimensional reasons,
the {\em transgression\/} from $\mathrm H^1(\Omega_o(\NN))$ to
$\mathrm H^2(\NN)$, i.~e. the inverse of the {\em suspension\/},
is an isomorphism and, under transgression, the class of $\gamma_o$
goes to the topological characteristic class of $\tau$ in $\mathrm
H^2(\NN)$.

A standard construction recovers  the circle bundle $\tau$ from a
topological holonomy of the kind $\gamma_o$: Identify the two paths
$w_1$ and $w_2$ in $\NN$ having $o$ as starting point and having
the same end point provided the composite $w_2^{-1} w_1$, which is
a closed path in $\Omega_o(\NN)$, has value $1 \in S^1$ under
$\gamma_0$. The above map $\gamma$ from $P_o(\NN)$ to
$S$ passes to a homeomorphism from the space of
equivalence classes in $P_o(\NN)$ onto $S$.

We mention in passing that this notion of topological holonomy led
Stasheff to the development of parallel transport in fiber spaces
\cite{staeigte}.

\subsection{The differential-geometric construction}
\label{difgeo}

We now suppose that $\NN$ is a smooth manifold. Then
lifting functions are provided by the operation of horizontal lift
relative to a connection.  A
variant of the construction of the universal
covering, similar to the topological reconstruction of a principal
circle bundle from its topological holonomy reproduced above,
yields the principal $S^1$-bundles on $N$ with connection having
prescribed curvature.

Let $w_1$ and $w_2$ be two
{\em piecewise smooth\/} paths in $\NN$
having $o$ as
starting point and having the same end point.  
We define a
{\em piecewise smooth\/} homotopy
from $w_1$ to $w_2$ {\em relative to the endpoints\/} 
to be a  piecewise smooth
map
\[
h\colon I \times I \longrightarrow \NN
\]
where the term \lq\lq piecewise smooth\rq\rq\ 
is to be interpreted in terms of some
paving of $I \times I$ consisting of polygons,
such that
\begin{itemize}
\item
$h(t,0)=w_1(t)$, $h(t,1)=w_2(t)$, for every $0 \leq t \leq 1$,
\item
$h(0,s)=o$,  $h(1,s)$ is independent of $s$, for every $0 \leq s
\leq 1$.
\end{itemize}

With this preparation out of the way, 
let $c$ be a closed 2-form on $\NN$ with integral periods.
Let $P_o(\NN)$ now denote the space of {\em
piecewise smooth\/} paths in $\NN$ having starting point $o$, let
$\Omega_o(\NN)$ denote the space of {\em piecewise smooth\/}
closed based loops in $\NN$, based at $o$, and let
$\Omega_o(\NN)_0$ be the subspace of piecewise smooth closed loops
which are homotopic to zero relative $o$. Standard smoothing
arguments show that the inclusions from the various piecewise
smooth path spaces  into the corresponding merely continuous path
spaces are homotopy equivalences \cite{warnbook}. 
Identify two piecewise smooth paths $w_1$ and $w_2$
that are homotopic under a piecewise smooth homotopy
$h$ from $w_1$ to $w_2$ relative to the endpoints 
such that $\int_{I\times I} h^*c$ is an integer.
Since $c$ has
integral periods, this condition does not depend on the choice of
homotopy $h$. Let $\overline S$ denote the space of equivalence
classes, let $\overline \tau \colon \overline S \to \widetilde
\NN$ and $\widehat \tau \colon \overline S \to \NN$ be the obvious
projection maps, and let $\Gamma$ be the space of equivalence
classes of closed loops at $o$.

\begin{prop}
Composition of closed loops turns $\Gamma$ into a group. 
\end{prop}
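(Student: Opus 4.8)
The plan is to verify the group axioms one by one, the multiplication being concatenation of loops based at $o$, relying throughout on two elementary remarks about the number $\int_{I\times I}h^{*}c$ attached to a piecewise smooth homotopy $h$ relative to the endpoints. First, this number is \emph{additive} when two such homotopies are pasted along a common edge, whether the pasting is performed in the $s$-direction or in the $t$-direction, since the integral of a form over a polygonal region is the sum of the integrals over its pieces. Second, it \emph{vanishes} as soon as $h$ is a reparametrization homotopy, that is, $h(t,s)=v(\phi_{s}(t))$ for a single piecewise smooth loop $v$ and a piecewise linear family $\phi_{s}\colon I\to I$: both partial derivatives of such an $h$ are collinear with $v'$, so $h^{*}c=0$ pointwise because a $2$-form annihilates any pair of parallel vectors. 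Together with the independence of the integrality condition from the choice of homotopy already recorded above, these two remarks turn each axiom into a short check.

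Concretely, first I would confirm that the defining relation is an equivalence relation on the space of piecewise smooth loops at $o$: reflexivity uses the constant homotopy (integral $0$), symmetry replaces $h(t,s)$ by $h(t,1-s)$ (integral negated, still an integer), and transitivity uses $s$-direction pasting, whose integral is the sum of two integers. Next I would check that concatenation descends to $\Gamma$: given a homotopy $h_{1}$ rel endpoints from $w_{1}$ to $w_{1}'$ with $\int h_{1}^{*}c\in\mathbb Z$ and a homotopy $h_{2}$ from $w_{2}$ to $w_{2}'$ with $\int h_{2}^{*}c\in\mathbb Z$, the $t$-direction paste of $h_{1}$ and $h_{2}$ is a homotopy rel endpoints from $w_{1}*w_{2}$ to $w_{1}'*w_{2}'$ --- here one uses that every loop involved has both endpoints equal to $o$, so the concatenations and the pasted homotopy are continuous and keep $o$ fixed --- and its integral is $\int h_{1}^{*}c+\int h_{2}^{*}c\in\mathbb Z$.

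It then remains to produce the unit, associativity and inverses, which is exactly where the vanishing remark is used: the class of the constant loop $e_{o}$ at $o$ is a two-sided unit because the standard homotopies rel endpoints from $e_{o}*w$ and from $w*e_{o}$ to $w$ are reparametrization homotopies of $w$, hence have integral $0$; associativity holds because $(w_{1}*w_{2})*w_{3}$ and $w_{1}*(w_{2}*w_{3})$ are joined by a reparametrization homotopy of the triple concatenation; and the class of $w^{-1}\colon t\mapsto w(1-t)$ is inverse to that of $w$, because the standard retracing homotopy rel endpoints from $w*w^{-1}$ to $e_{o}$ (and symmetrically the one from $w^{-1}*w$ to $e_{o}$) has image inside the image of $w$ and is again of reparametrization type, so its integral is $0$. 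Assembling these statements yields the group structure, with concatenation as the product. The only delicate point --- and the one I expect to be the main obstacle --- is the bookkeeping showing that each pasted or reparametrized map is genuinely piecewise smooth with respect to some polygonal paving of $I\times I$ and genuinely fixes the endpoints throughout (in particular that the interior junction point $w_{j}(1)=o$ of a concatenation stays put during each homotopy); once that is in place, the additivity and vanishing remarks reduce every axiom to a one-line verification.
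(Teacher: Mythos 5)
Your proof is correct and follows essentially the same route as the paper, whose entire proof is the one-line remark that one copies the construction of the fundamental group; your two observations (additivity of $\int h^{*}c$ under pasting, and its vanishing for reparametrization homotopies because $h^{*}c=\phi^{*}(v^{*}c)$ with $v^{*}c\in\mathcal A^{2}(I)=0$) are precisely what make that copying go through for the refined equivalence relation. No gaps.
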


\begin{proof}
The argument consists in copying the construction of the fundamental group. 
\end{proof}

The
assignment to a closed path $u\colon I \to \NN$ with $u(o)=o$ of
its class in $\Gamma$ yields a surjective map $\Omega_o(\NN) \to
\Gamma$, that to such a closed path $u\colon I \to \NN$ with $u(o)=o$
which is, furthermore, null-homotopic relative to $o$ of 
an integral of the kind
$\int_{I\times I} h^*c$ modulo $\mathbb Z$ yields a surjective map
$\Omega_o(\NN)_0 \to S^1$, and the two maps fit together in the
commutative diagram
\begin{equation*}
\CD
\Omega_o(\NN)_0  @>>> \Omega_o(\NN) @>>> \pi_1(\NN)
\\
@VVV @VVV @VV{\mathrm{Id}}V
\\
S^1 @>>> \Gamma @>>> \pi_1(\NN)
\endCD
\end{equation*}
whose bottom row is a central  extension
\begin{equation}
1 \longrightarrow S^1 \longrightarrow \Gamma \longrightarrow
\pi_1(\NN) \longrightarrow 1 \label{centralext1}
\end{equation}
of Lie groups. Moreover, the familiar composition
of paths
\[
P_o(\NN) \times\Omega_o(\NN) \longrightarrow P_o(\NN)
\]
induces a principal $\Gamma$-action on $\overline S$ turning
\begin{equation}
\widehat \tau \colon \overline S \longrightarrow N \label{pb}
\end{equation}
into a principal $\Gamma$-bundle, and the
restriction of the action to $S^1$ turns $\overline \tau$ into a
principal $S^1$-bundle.

Let $\tau \colon S \to N$ be a principal $S^1$-bundle with a
connection 1-form $\omega$ having curvature $c$. The operation of
horizontal lift relative to $\omega$ furnishes a unique map from
$P_o\NN$ to $S$ which passes to  a map from $\overline S$ to $S$
which, in turn,  fits into a morphism $\widehat \tau \to \tau$ of
principal bundles on $N$, i.~e. into a commutative diagram of the
kind
\begin{equation}
\CD \Gamma @>>> \overline S @>{\overline \tau}>> \NN
\\
@VVV @VVV @VV{\mathrm{Id}}V
\\
S^1 @>>> S @>{\tau}>> \NN .
\endCD
\label{CD4}
\end{equation}
Here the left-hand unlabelled vertical homomorphism from $\Gamma$
to $S^1$ is induced from the holonomy $\Omega_o(\NN) \to S^1$ of
$\omega$. This homomorphism splits the exact sequence
\eqref{centralext1}.

We will denote the de Rham algebra of differential forms on $\NN$
by $\mathcal A(\NN)$. We use the (nowadays)
slightly unusual notation $\mathcal A$ to
avoid notational conflict with the notation $\Omega$ for a based
loop space.

Henceforth we will exploit the theory of {\em differentiable\/}
spaces \cite{chenone}, see also \cite{sourithr} where these spaces
are referred to as \lq\lq {\em espaces diffeologiques\/}\rq\rq.
Below, when we refer to a form on a space which is not a smooth
finite-dimensional manifold in an obvious manner, the term \lq\lq
form\rq\rq\ is understood relative to an obvious differentiable
structure. In this vein, view $P_{o}(N)$ as a differentiable space
in the obvious way, and let $\mathcal A^*(P_{o}(N))$ be the
algebra of differential forms on $P_{o}(N)$, relative to the
differentiable structure. Let
\begin{equation}
\ETA\colon \mathcal A^*(P_{o}(N)) \to \mathcal A^{*-1}(P_{o}(N))
\label{homotopy}
\end{equation}
be the {\em homotopy\/} operator given by integration along the
paths which constitute the points of $P_{o}(N)$, so that
\begin{equation}
d\ETA + \ETA d =\mathrm{Id} . \label{homotopy0}
\end{equation}
Thus, integration of $c$ along the paths which constitute the
points of $P_o(\NN)$ yields the 1-form $\vartheta_c =\ETA(p_o^*c)$
on $P_o(\NN)$ such that $p_o^*(c) = d\vartheta_c$, and this 1-form
descends to a $\Gamma$-connection form 
\[
\overline \omega_c \colon \mathrm T \overline S \longrightarrow \mathbb R
\] 
on
$\overline S$ whose curvature coincides with the 2-form $c$ on
$\NN$. Under \eqref{CD4}, the connection form $\overline \omega_c$
descends to $\omega$.

Thus we conclude: {\em The extension\/} \eqref{centralext1} {\em
splits and, given the splitting $\sigma\colon \Gamma \to S^1$, the
induced principal $S^1$-bundle $\tau_{\sigma}=\sigma_*(\widehat
\tau)\colon S_{\sigma} \to \NN$ with connection $\omega_{\sigma}=
\sigma_*(\omega_c)$ has curvature $c$.\/} This recovers the
following classical fact:  

\begin{prop}
The group $\mathrm
H^1(\pi_1(\NN),S^1)=\mathrm{Hom}(\pi_1(\NN),S^1)$ acts simply transitively
on the isomorphism classes of principal $S^1$-bundles with
connection on $\NN$ having curvature $c$. Furthermore, two such
principal $S^1$-bundles with connection are topologically equivalent if and
only if their \lq\lq difference\rq\rq\ in
$\mathrm{Hom}(\pi_1(\NN),S^1)$ lifts to a homomorphism from
$\pi_1(\NN)$ to $\mathbb R$. In particular, when $\NN$ is
simply connected, up to gauge transformation, there is a unique
principal $S^1$-bundle with connection on $\NN$ having curvature
$c$.
\end{prop}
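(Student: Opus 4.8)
The plan is to extract everything from the construction of $\overline S$ and the split central extension \eqref{centralext1} carried out above. First I would establish the action. Given a homomorphism $\rho\colon\pi_1(\NN)\to S^1$, compose it with the projection $\Gamma\to\pi_1(\NN)$ to obtain a homomorphism $\Gamma\to S^1$ which, together with the identity on the central $S^1$, splits \eqref{centralext1}; call this splitting $\sigma_\rho$. As recalled just above, the pushout bundle $\tau_{\sigma_\rho}=(\sigma_\rho)_*(\widehat\tau)\colon S_{\sigma_\rho}\to\NN$ with connection $\omega_{\sigma_\rho}=(\sigma_\rho)_*(\overline\omega_c)$ has curvature $c$. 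Turning this into an action on isomorphism classes is then formal: if two splittings $\sigma,\sigma'$ of \eqref{centralext1} differ by a homomorphism $\rho\colon\pi_1(\NN)\to S^1$ (the splittings of a central extension of $G$ by an abelian group $A$ form a torsor over $\mathrm{Hom}(G,A)$ once one splitting exists), then $\tau_{\sigma'}$ is obtained from $\tau_{\sigma}$ by tensoring with the flat bundle $\rho$, and flat bundles have curvature zero, so the curvature is unchanged. Hence $\mathrm{Hom}(\pi_1(\NN),S^1)$ acts on the set of iso-classes-with-connection-and-curvature-$c$.

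Next I would prove transitivity and freeness. Given any principal $S^1$-bundle $\tau\colon S\to\NN$ with connection $\omega$ of curvature $c$, the horizontal-lift lifting function produces the morphism \eqref{CD4}; its left-hand vertical map is a splitting $\sigma_\tau$ of \eqref{centralext1}, and by construction $\tau_{\sigma_\tau}\cong\tau$ compatibly with connections. So every such bundle is a pushout of $\widehat\tau$ along some splitting, which gives transitivity. For freeness, suppose $\rho\in\mathrm{Hom}(\pi_1(\NN),S^1)$ fixes the class of some $\tau$, i.e. $\tau\otimes(\text{flat bundle }\rho)\cong\tau$ as bundles with connection; then the flat bundle $\rho$ is trivial with its flat connection, which forces $\rho=1$ since the monodromy of the trivial flat line bundle is trivial. (Concretely: the monodromy of $\tau_{\sigma}$ around a loop, read via \eqref{CD4}, recovers $\sigma$ on $\Gamma$, so distinct $\rho$ give non-isomorphic bundles-with-connection.) This gives the simple transitivity.

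For the second assertion I would compare topological (as opposed to connection-preserving) isomorphism. Two bundles with connection having the same curvature are topologically isomorphic iff their difference class $\rho\in\mathrm{Hom}(\pi_1(\NN),S^1)$ maps to zero in $\mathrm H^2(\NN;\mathbb Z)$ under the connecting map $\mathrm H^1(\NN;S^1)\to\mathrm H^2(\NN;\mathbb Z)$ of the coefficient sequence $0\to\mathbb Z\to\mathbb R\to S^1\to 0$. The exactness of that sequence says the kernel of this connecting map is exactly the image of $\mathrm H^1(\NN;\mathbb R)=\mathrm{Hom}(\pi_1(\NN),\mathbb R)\to\mathrm{Hom}(\pi_1(\NN),S^1)$, i.e. those $\rho$ that lift to a homomorphism $\pi_1(\NN)\to\mathbb R$. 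Identifying $\mathrm H^1(\NN;S^1)$ with $\mathrm{Hom}(\pi_1(\NN),S^1)$ (via Hurewicz/universal coefficients, $S^1$ being a $K(\mathbb Z,1)$, or simply because flat $S^1$-bundles are classified by monodromy) closes the argument. The last sentence is then immediate: if $\NN$ is simply connected then $\mathrm{Hom}(\pi_1(\NN),S^1)$ is trivial, so the simply transitive action has a one-point orbit.

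The main obstacle I expect is not any single deep step but the bookkeeping of what \emph{category} the action lives in: one must be careful to distinguish isomorphism of principal bundles with connection from topological isomorphism throughout, since the simple transitivity holds in the former category while the ``topologically equivalent'' clause concerns the latter — conflating the two is the natural trap here. A secondary point requiring a line of justification is the identification $\mathrm H^1(\pi_1(\NN),S^1)=\mathrm{Hom}(\pi_1(\NN),S^1)$ (trivial, since $S^1$ is a trivial $\pi_1$-module and $\mathrm H^1$ of a group with coefficients in a trivial module is just $\mathrm{Hom}$), and the compatibility of the monodromy/holonomy description with the connecting homomorphism in the Bockstein sequence, which is where the ``lifts to $\mathbb R$'' condition genuinely comes from.
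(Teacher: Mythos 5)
Your proposal is correct and follows essentially the same route as the paper, which derives the proposition from the split central extension \eqref{centralext1}, the torsor of splittings over $\mathrm{Hom}(\pi_1(\NN),S^1)$, and the holonomy splitting attached via \eqref{CD4} to any given bundle with connection of curvature $c$; you merely make explicit the freeness argument and the Bockstein computation for the topological-equivalence clause, which the paper leaves implicit.
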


\subsection{The equivariant extension}
\label{equivext}

Let $\GG$ be a Lie group, let $\mathfrak {\gg}$ denote its Lie
algebra, and
suppose that $\NN$ is a (left) $G$-manifold.
Through the associated infinitesimal $\fra g$-action ${\mathfrak{\gg}\to
\mathrm{Vect}(\NN)}$ induced by the $\GG$-action on $\NN$,
the algebra $C^{\infty}(\NN)$ acquires a (right) $\fra g$-module
structure; let
\[
d_{\mathfrak{\gg}} \colon \mathrm{Alt}(\mathfrak
g,C^{\infty}(\NN))\longrightarrow \mathrm{Alt}(\mathfrak
g,C^{\infty}(\NN))
\]
be the resulting (Cartan-Chevalley-Eilenberg) Lie algebra
cohomology operator.

Let $c$ be a $G$-invariant 2-form on $\NN$ and 
let $\tau \colon S \to \NN$ be a principal
$S^1$-bundle on $\NN$ with connection $\nabla$ having curvature $c$. Let
$\GG_{\tau}$ denote the group of pairs $(\phi,x)$ where $\phi
\colon S \to S$ is a bundle automorphism which, on the base $\NN$,
descends to the diffeomorphism $x_{\NN}$ induced from $x \in \GG$.
Since $c$ is $\GG$-invariant, the obvious map from $\GG_{\tau}$ to
$\GG$ is surjective and hence fits into the group extension
\begin{equation}
1 \longrightarrow \mathcal G(\tau) \longrightarrow \GG_{\tau}
\longrightarrow \GG \longrightarrow 1 \label{ext1}
\end{equation}
where $\mathcal G(\tau)\cong \mathrm{Map}(\NN, S^1)$ is the
(abelian) group of gauge transformations of $\tau$. Here
conjugation in $\GG_{\tau}$ induces the obvious action of $G$ on
$\mathcal G(\tau)$ coming from the $G$-action on $\NN$. 
Let $\mathfrak g(\tau)\cong \mathrm{Map}(\NN, \mathbb R)
=C^{\infty}(\NN)$ be the abelian Lie algebra of infinitesimal
gauge transformations of $\tau$, made into a $\GG$- and hence
$\mathfrak{\gg}$-module in the obvious manner. 
The
Lie algebra
extension
associated
with the group extension \eqref{ext1} takes the form
\begin{equation}
0 \longrightarrow \mathfrak {\gg}(\tau) \longrightarrow \mathfrak
{\gg} _{\tau} \longrightarrow \mathfrak {\gg} \longrightarrow 0.
\label{ext2}
\end{equation}
Through the
infinitesimal $\mathfrak{\gg}$-action  on $\NN$, the
connection $\nabla$ induces a section $\nabla_{\mathfrak{\gg}}\colon
\mathfrak{\gg} \to \mathfrak {\gg} _{\tau}$ for \eqref{ext2} in
the category of vector spaces, and we denote by
${c_{\mathfrak{\gg}}\in \mathrm{Alt}^2(\mathfrak
g,C^{\infty}(\NN))}$ the $C^{\infty}(\NN)$-valued Lie algebra
2-cocycle on $\mathfrak{\gg}$ determined by
$\nabla_{\mathfrak{\gg}}$ and the Lie algebra extension
\eqref{ext2}.

For $X\in \mathfrak{\gg}$, we will denote by $X_{\NN}$ the
associated fundamental vector field on $\NN$.  Recall that a {\em
momentum mapping\/} for $c$ (whether or not $c$ is non-degenerate)
is a $\GG$-equivariant map $\mu \colon \NN \to \mathfrak{\gg}^*$
such that the adjoint $\mu^{\sharp}\colon \mathfrak{\gg} \to
C^{\infty}(\NN)$ satisfies the identity
\[
d(\mu^{\sharp}(X)) = c(X_{\NN},\,\cdot\,),\ X \in \mathfrak{\gg};
\]
we will then refer to $\mu^{\sharp}$ as a {\em comomentum\/}. The
connection $\nabla$ and the {2-form} $c$ being fixed, the comomenta
 are precisely the $\GG$-equivariant
$C^{\infty}(\NN)$-valued 1-cochains $\delta$ on $\mathfrak{\gg}$ such that
$d_{\mathfrak{\gg}}(\delta) = c_{\mathfrak{\gg}}\in \mathrm{Alt}(\mathfrak
g,C^{\infty}(\NN))$;
in particular, each such comomentum
\[
\delta\colon \mathfrak g \longrightarrow \mathfrak g(\tau)\cong
\mathrm{Map}(\NN, \mathbb R) =C^{\infty}(\NN) \] yields the Lie
algebra section
\begin{equation}
\nabla_{\mathfrak{\gg}} + \delta \colon \mathfrak{\gg}
\longrightarrow \mathfrak {\gg} _{\tau} \label{section1}
\end{equation} for
the Lie algebra extension \eqref{ext2}. These observations entail
the following well known fact: 

\begin{prop}
When $\GG$ is connected, a
momentum mapping $\mu \colon \NN \to \mathfrak{\gg}^*$ induces a
lift of the $\GG$-action to an action of a suitable covering group
$\widetilde {\GG}$  on $S$ compatible with the $S^1$-bundle
structure and thus turning $\tau$ into a $\widetilde G$-equivariant
principal $S^1$-bundle, and every such lift induces a momentum mapping.
Furthermore, the connection $\nabla$ on $\tau$ is then  as well
$\widetilde {\GG}$-invariant.
\end{prop}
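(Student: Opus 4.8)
The plan is to exploit the correspondence, established just above, between comomenta for the pair $(\nabla,c)$ and $\GG$-equivariant Lie algebra splittings of the extension \eqref{ext2}, and then to integrate such a splitting to a genuine group action by standard Lie theory. Concretely, starting from a momentum mapping $\mu$ I would put $\delta=\mu^{\sharp}$; then $\delta$ is a $\GG$-equivariant $C^{\infty}(\NN)$-valued $1$-cochain on $\mathfrak g$ with $d_{\mathfrak g}\delta=c_{\mathfrak g}$, and the section $\sigma=\nabla_{\mathfrak g}+\delta\colon\mathfrak g\to\mathfrak g_{\tau}$ of \eqref{ext2} is a homomorphism of Lie algebras, i.e.\ a splitting of \eqref{ext2} --- the cocycle $c_{\mathfrak g}$ measures the failure of the linear section $\nabla_{\mathfrak g}$ to be a homomorphism, and adding the comomentum with $d_{\mathfrak g}\delta=c_{\mathfrak g}$ cancels it. Composing $\sigma$ with the natural map $\mathfrak g_{\tau}\to\mathrm{Vect}(S)$ (recall that $\mathfrak g_{\tau}$ is built from the $S^{1}$-invariant vector fields on $S$ that cover fundamental vector fields on $\NN$) produces an infinitesimal $\mathfrak g$-action on $S$ by infinitesimal bundle automorphisms lifting the infinitesimal action on $\NN$. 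Each $\sigma(X)$ is complete: it is $\tau$-related to the complete fundamental vector field $X_{\NN}$, and $\tau$ is a proper map, so an integral curve of $\sigma(X)$ stays inside $\tau^{-1}$ of the compact image of a finite arc of an integral curve of $X_{\NN}$ and cannot escape to infinity in finite time. By the classical integration theorem for Lie algebra actions by complete vector fields, $\sigma$ then integrates to a smooth action on $S$ of the simply connected covering group $\widetilde{\GG}$ of $\GG$ --- this is the \lq\lq suitable\rq\rq\ covering group, and in specific situations the action may descend to a smaller covering. Since each $\sigma(X)$ is $S^{1}$-invariant, its flow commutes with the $S^{1}$-action and covers the flow of $X_{\NN}$, so $\widetilde{\GG}$ acts by bundle automorphisms over the $\GG$-action on $\NN$, turning $\tau$ into a $\widetilde{\GG}$-equivariant principal $S^{1}$-bundle.

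To obtain the last assertion, let $\omega$ be the connection $1$-form of $\nabla$, so $d\omega=\tau^{*}c$. The horizontal summand $\nabla_{\mathfrak g}(X)$ of $\sigma(X)$ is annihilated by $\omega$, while its vertical summand is the fundamental vector field of $\delta(X)=\mu^{\sharp}(X)$; hence $\iota_{\sigma(X)}\omega=-\tau^{*}\mu^{\sharp}(X)$. Cartan's formula and the defining identity $d(\mu^{\sharp}(X))=c(X_{\NN},\,\cdot\,)$ of a comomentum then give
\[
\mathcal L_{\sigma(X)}\omega \;=\; d\,\iota_{\sigma(X)}\omega+\iota_{\sigma(X)}\,d\omega \;=\; -\,\tau^{*}d\bigl(\mu^{\sharp}(X)\bigr)+\tau^{*}\bigl(\iota_{X_{\NN}}c\bigr)\;=\;0,
\]
and, $\widetilde{\GG}$ being connected, $\omega$ and therefore $\nabla$ is $\widetilde{\GG}$-invariant.

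Conversely, a lift of the $\GG$-action to a $\widetilde{\GG}$-action on $S$ by bundle automorphisms preserving the connection (the kind just produced) differentiates to a homomorphism $\widetilde{\GG}\to\GG_{\tau}$, hence to a Lie algebra homomorphism $\rho\colon\mathfrak g\to\mathfrak g_{\tau}$ splitting \eqref{ext2}. Then $\delta=\rho-\nabla_{\mathfrak g}$ takes values in $\mathfrak g(\tau)=C^{\infty}(\NN)$; equivalently $\mu^{\sharp}(X)=-\omega(\rho(X))$ is $S^{1}$-basic and so descends to a function on $\NN$. Because $\rho$ is a homomorphism, $d_{\mathfrak g}\delta=c_{\mathfrak g}$, and running the Cartan computation backwards with $\mathcal L_{\rho(X)}\omega=0$ gives $d(\mu^{\sharp}(X))=\iota_{X_{\NN}}c$; moreover $\mu$ is $\GG$-equivariant because $\rho$ intertwines the adjoint representations, being the differential of a group homomorphism, and $\omega$ is $\widetilde{\GG}$-invariant. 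Thus $\delta=\mu^{\sharp}$ is a comomentum, $\mu$ is a momentum mapping, and the two constructions are mutually inverse.

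The one genuinely substantive step is the integration of the infinitesimal action $\sigma$: one must check completeness of the lifted vector fields and invoke the integration theorem for infinitesimal actions, and one must accept that in general only an action of the universal (or some intermediate) cover of $\GG$ is recovered --- this is precisely why a discrete kernel may be forced to act through constant gauge transformations. Everything else --- the passage between comomenta and Lie algebra splittings, the one-line Cartan computation, and the equivariance of $\mu$ --- is routine bookkeeping with the extensions \eqref{ext1}--\eqref{ext2} and the dictionary preceding the statement.
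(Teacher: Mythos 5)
Your argument is correct and follows the route the paper intends: the paper gives no separate proof, merely asserting that the preceding dictionary between comomenta and Lie-algebra splittings of \eqref{ext2} entails the proposition, and your writeup supplies exactly the missing details (the splitting $\nabla_{\mathfrak g}+\mu^{\sharp}$ of \eqref{ext2}, completeness of the lifted fields via properness of $\tau$, integration to the simply connected cover by Palais' theorem, the Cartan computation for the invariance of $\nabla$, and the converse passage from a lift back to a comomentum). No gaps.
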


\section{Circle bundles on the fiber of a map}\label{fiber}

Let $M$ and $N$ be smooth connected manifolds, let $f \colon M \to
N$ be a smooth map,  and let $P_f$ denote the {\em fiber\/} of
$f$, made precise below. Given a closed relative 2-form for $f$
(made precise below) with integral periods, using a variant of the
method in the previous section, we will construct the
principal $S^1$-bundles with connection on $P_f$ having the given relative
2-form as curvature.

Recall that, for a space $Y$ and a point $y$ of $Y$, the obvious projection map
$
\pi_y\colon P_y(Y)\longrightarrow Y
$
which sends a path to its end point is a Hurewicz 
fibration onto the path component of $y$ and
the fiber over the point $y$ amounts to the space
$\Omega_y(Y)$ of based loops in $Y$, based at $y$. We will denote
by $i_y\colon\Omega_y(Y)\to P_y(Y)$ the corresponding inclusion.

Let momentarily $M$ and $N$ be merely pathwise connected
topological spaces having suitable local properties, let $f \colon
M \to N$ be a (continuous) map, let $o$ be a point of $M$, taken
henceforth as base point, take $f(o)$ to be the base point of $N$, and
let $P_f$ denote the {\em fiber\/} of the map $f$, that is,
\begin{equation*}
P_f= M \times_N P_{f(o)}(N)=
\{(q,u); u(0) = f(o), u(1) = f(q)\} \subseteq M \times
P_{f(o)}(N) .  
\end{equation*}
We will denote the obvious projection map from $P_f$ to $M$ which
sends the pair $(u,q)\in P_f$ to $q\in M$ by
$
\pi_f\colon P_f \longrightarrow M
$
and we will denote by ${j_f\colon P_f \to
P_{f(o)}(N)}$ the induced map. Since the resulting diagram
\begin{equation}
\CD P_f @>{j_f}>> P_{f(o)}(N)
\\
@V{\pi_f}VV @VV{\pi_{f(o)}}V
\\
M @>>f> N
\endCD
\label{CD0}
\end{equation}
is a pull back square, $\pi_f$ a fibration; the fiber $\pi_f^{-1}(o)$
at the point $o$ of $M$ amounts to the based loop space
$\Omega_{f(o)}(N)$, and we denote by
\[
i_f \colon \Omega_{f(o)}(N) \longrightarrow P_f
\]
the corresponding injection. All these constructions and facts are
well known and classical, cf. e.~g. \cite{spanier}.

Let $j_1\colon I \to I^2$ be the injection which sends $t \in I$
to $(t,0)\in I^2$, and let $E_f$ denote the space of commutative
diagrams
\begin{equation}
\CD \{0\} @>>>I @>{j_1}>> I^2
\\
@VVV @VVwV @VV{\phi}V
\\
\{o\} @>>> M @>f>> N
\endCD
\label{CD1}
\end{equation}
having the property that, for every $0 \leq s \leq 1$, (i)
$\phi(0,s)= f(o)$ and that (ii) $\phi(1,s)$ is independent of $s$.
Thus $E_f$ is a space of {\em strings\/} in $N$ which are subject
to a boundary condition phrased in terms of the map $f$. Given
such a string $(w,\phi)$ of the kind \eqref{CD1}, define
$u_\phi\colon I \to N$ by $u_\phi(t)=\phi(t,1)$ ($t \in I$). Let
$\widehat \pi_f \colon E_f \to P_f$ be the obvious map which sends
$(w,\phi)$ to $(w(1),u_\phi)$.

The space $E_f$ is contractible. Indeed, the assignment to
$(w,\phi,r)\in E_f \times I$ of $(w_r,\phi_r) \in E_F$ given by
$w_r=w$ and $\phi_r(t,s)=\phi(t,(1-r)s)$ contracts $E_f$ onto a
subspace of $E_f$ which amounts to the space $P_oM$ of paths in
$M$ having starting point $o$, and the space $P_oM$, in turn, is
contractible.

Suppose that $P_f$ is pathwise connected. This can always be
arranged for, in the following way: Since $M$ and $N$ are
(supposed to be) pathwise connected, $P_f$ being pathwise
connected is equivalent to the induced map $\pi_1(M) \to \pi_1(N)$
being surjective. When this map is not surjective, let $\overline
N$ be the covering space of $N$ having fundamental group the image
of $\pi_1(M)$ in $\pi_1(N)$; a choice of pre-image $\overline o$
of $f(o)$ then determines a lift $\overline f \colon M \to
\overline N$ of $f$ with $\overline f(o) = \overline o$, the space
$P_{\widehat f}$, viewed as a subspace of $P_f$ in the obvious
way,  is a path component of $P_f$, and every path component of
$P_f$ arises in this manner. Thus we may always assume that $P_f$
is pathwise connected.

With this preparation out of the way, $P_f$ being pathwise
connected, the map $\widehat \pi_f$ is surjective and hence a
fibration. Since the total space $E_f$ is contractible, the
construction in the previous section, with $E_f$ instead of
$P_o(P_f)$, will furnish the principal circle bundles on $P_f$
with connection. We will now explain the details thereof.

Extending the notation $I$ for the unit interval and
$I^2$ for the unit square,  let $I^3$
denote the unit cube and, for $1\leq n \leq 3$, let
\[
J^n=\partial I^{n-1} \times I \cup I^{n-1} \times \{ 1\} \subseteq
I^n ;
\]
here $\partial I^n$ denotes the boundary of $I^n$. 

Let $u_o$ be the constant path in $N$ concentrated at the point
$f(o)$ of $N$, and take $(o,u_o)$ as base point of $P_f$. The
fiber $F_{(o,u_o)}$ over the point $(o,u_o)$ of $P_f$ is the space
of commutative diagrams or {\em strings\/} of the kind
\begin{equation}
\CD J^1 @>>> \partial I^2 @>>> I^2
\\
@VVV @VVwV @VV{\phi}V
\\
\{o\} @>>> M @>f>> N .
\endCD
\label{CD2}
\end{equation}
Here we do not distinguish in notation between the closed path $w
\colon I \to M$ and the map $\partial I^2 \to M$ which coincides
with $w$ on $I\times \{0\}$ and which is constant on $J^1\subseteq
\partial I^2$.
Since $E_f$ is contractible, the fiber $F_{(o,u_o)}$ is homotopy
equivalent to the based loop space $\Omega_{(o,u_o)}P_f$ of $P_f$.
The loop multiplication corresponds to the familiar juxtaposition
of strings of the kind \eqref{CD2}. Consequently the space of
components of the fiber $F_{(o,u_o)}$ underlies the fundamental
group $\pi_1(P_f)$, and  the fundamental group  of each path
component of the fiber $F_{(o,u_o)}$ is given by the second
homotopy group $\pi_2(P_f)$.

The obvious forgetful map from $E_f$ to $P_o(M)$ which sends
$(w,\phi)\in E_f$ to $w\in P_oM$ is a fibration; the fiber
$F_{u_o}$ over the trivial path $u_o$ concentrated at $o\in M$ is
the space of maps from $I^2$ to $N$ having the property that
\[
\phi(0,s) = f(o) =  \phi(1,s) = \phi(t,0),\ 0 \leq t \leq 1, \ 0
\leq s \leq 1.
\]

Given two commutative diagrams $(w_1,\phi_1)$ and $(w_2,\phi_2)$
of the kind \eqref{CD1}, we define a {\em homotopy\/} $(h,H)$ from
$(w_1,\phi_1)$ to  $(w_2,\phi_2)$, written as
\[
(h,H)\colon (w_1,\phi_1) \simeq (w_2,\phi_2),
\]
to be a commutative diagram of the kind
\begin{equation}
\CD \{0\}\times I @>>> I\times I @>{j_1\times\mathrm{Id}}>> I^2
\times I
\\
@VVV @VhVV @VVHV
\\
\{o\} @>>> M @>>f> N
\endCD
\label{CD11}
\end{equation}
subject to the following requirements:
\begin{itemize}
\item
$h(t_1,0) = w_1(t_1)$,\ $0\leq t_1 \leq 1$,
\item
$h(t_1,1) = w_2(t_1)$, \ $0\leq t_1 \leq 1$,
\item
$H(t_1,t_2,0) = \phi_1(t_1,t_2)$, \ $0\leq t_1,t_2 \leq 1$,
\item
$H(t_1,t_2,1) = \phi_2(t_1,t_2)$, \ $0\leq t_1,t_2 \leq 1$,
\item
$h(1,s)$ is independent of $s$, \  $0\leq s \leq 1$,
\item
$H(0,t_2,s)= f(o)$, \ $0\leq t_2,s \leq 1$,
\item
$H(1,t_2,s)$ is independent of $s$, \ $0\leq t_2,s \leq 1$,
\item
$H(t_1,1,s)$ is independent of $s$,\ $0\leq t_1,s \leq 1$.
\end{itemize}

In the same vein, given two commutative diagrams $(w_1,\phi_1)$
and $(w_2,\phi_2)$ of the kind \eqref{CD2}, a {\em homotopy\/}
$(h,H)$ from $(w_1,\phi_1)$ to $(w_2,\phi_2)$, written as
\[
(h,H)\colon (w_1,\phi_1) \simeq (w_2,\phi_2),
\]
is  a commutative diagram of the kind
\begin{equation}
\CD J^1\times I @>>> \partial I^2\times I @>{j\times\mathrm{Id}}>>
I^2 \times I
\\
@VVV @VhVV @VVHV
\\
\{o\} @>>> M @>>f> N .
\endCD
\label{CD12}
\end{equation}
The homotopy classes relative to this homotopy relation are well
known to underly the fundamental group $\pi_1(P_f)$ of $P_f$ or,
equivalently, the second relative homotopy group $\pi_2(f)$ and
hence constitute the space of path components of the fiber $F_{(o,
u_o)}$ of $\widehat \pi_f$. The group structure of $\pi_1(P_f)$ is
induced by the familiar operation of juxtaposition relative to the
first parameter of $I^2$ of two diagrams of the kind \eqref{CD2}
and subsequent reparametrization.

We now suppose that $M$ and $N$ are ordinary smooth manifolds and
that $f$ is a smooth map. With an abuse of notation, we will then
denote by $P_{f(o)}(N)$ the space of {\em piecewise smooth\/}
paths in $N$ having starting point $f(o)$, by $P_f$ the fiber of
$f$ defined merely in terms of {\em piecewise smooth\/} paths in
$N$, and by $E_f$ the space of {\em piecewise smooth\/} strings of
the kind \eqref{CD1}, where the term piecewise smooth is to be
interpreted in terms of a paving of $I^2$.

In the obvious way, we view $P_{f(o)}(N)$ as a differentiable
space in the sense of \cite{chenone}, and we denote by $\mathcal
A^*(P_{f(o)}(N))$ the resulting algebra of differential forms on
$P_{f(o)}(N)$, relative to the differentiable structure. Let
\begin{equation}
\ETA\colon \mathcal A^*(P_{f(o)}(N)) \to \mathcal
A^{*-1}(P_{f(o)}(N)) \label{homotopy1}
\end{equation}
be the {\em homotopy\/} operator given by integration along the
paths which constitute the points of $P_{f(o)}(N)$, so that
\begin{equation}
d\ETA + \ETA d =\mathrm{Id} . \label{homotopy2}
\end{equation}
Let $\lambda$ be a closed 3-form on $N$. Then integration yields
the 2-form
\begin{equation}
\beta_{\lambda}=\ETA (\pi_{f(o)}^*(\lambda)) \label{2form}
\end{equation}
on $P_{f(o)}(N)$ such that $d\beta_{\lambda} =
\pi_{f(o)}^*(\lambda)$, and $\iota_{f(o)}^*\beta_{\lambda}$ is a
closed 2-form on $\Omega_{f(o)}(N)$.

Let $\cc$ be a 2-form on $M$ such that $d\cc = f^*(\lambda) \in
\mathcal A^3(M)$. Then
\begin{equation}
\cc_{(f,\lambda,\cc)}= j_f^*(\beta_{\lambda})- \pi_f^*(\cc)
\label{2form2}
\end{equation}
is a closed 2-form on $P_f$ which restricts to
$j_f^*(\beta_{\lambda})$, that is,
\[
i_f^*(\cc_{(f,\lambda,\cc)}) = j_f^*(\beta_{\lambda}) .
\]
Furthermore, $[\cc_{(f,\lambda,\cc)}]\in
\mathrm H^0(N,\mathrm H^2(P_f))$ transgresses to ${[\lambda]\in \mathrm
H^3(N,\mathrm H^0(P_f))}$.

Suppose that $\lambda$ has integral periods and that, furthermore,
the pair $(\lambda,\cc)$ has integral periods in the sense that,
given a 3-manifold $C$  and a commutative diagram
\begin{equation}
\CD \partial C @>>> C
\\
@VhVV @VVHV
\\
M @>>f> N
\endCD
\label{CD5}
\end{equation}
where $h$ and $H$ are piecewise smooth maps relative to a  paving
of $C$, the difference
\[
 \int_C H^*(\lambda)- \int_{\partial C}h^*(\cc)
\]
is an integer. Then the closed 2-form $\cc_{(f,\lambda,\cc)}$ on
$P_f$ has integral periods. Hence there is a principal circle
bundle on $P_f$ with a connection having curvature
$\cc_{(f,\lambda,\cc)}$. Guided by the considerations in the
previous section, we will now spell out an explicit construction
for all such circle bundles, cf. Corollary \ref{cor} below.

Define the {\em piecewise smooth\/} strings $(w_1,\phi_1)$ and
$(w_2,\phi_2)$ of the kind \eqref{CD1}  to be {\em equivalent\/}
whenever there is a {\em piecewise smooth\/}  homotopy
\[
(h,H)\colon (w_1,\phi_1) \simeq (w_2,\phi_2)
\]
of the kind \eqref{CD11} such that
\[
\int_{I^3} H^*\lambda -\int_{I^2} h^*\cc
\]
is an integer; here $(h,H)$ being piecewise smooth refers to a
suitable paving of the cube $I^3$. Since the pair $(\lambda,\cc)$
has integral periods, this equivalence relation is well defined.
Let $\widehat P_f$ denote the space of equivalence classes of
such strings $(w,\phi)$ of the kind \eqref{CD1}.

Likewise, let $\Gamma_f$ be the space of equivalence classes of
piecewise smooth strings $(w,\phi)$ of the kind \eqref{CD2}, the
equivalence relation being defined in terms of piecewise smooth
homotopies of the kind \eqref{CD12}. The operation of
juxtaposition relative to the first parameter of $I^2$ of two
diagrams of the kind \eqref{CD2} and subsequent reparametrization,
that is, the same operation of composition of diagrams as that
which defines the group structure of $\pi_1(P_f)$, turns $\Gamma_f$
into a group. Moreover, denote by $F_{(o,U_o),0}$ the subspace of
$F_{(o,U_o)}$ of diagrams of the kind \eqref{CD2} that are
null-homotopic; the assignment to such a diagram that is
null-homotopic via a homotopy $(h,H)$ of the difference $ \int_C
H^*(\lambda)- \int_{\partial C}h^*(\cc)$ modulo $\mathbb Z$ yields
a map $F_{(o,U_o),0} \to S^1$, and  this map and the canonical
projection from $F_{(o,U_o)}$ to $\Gamma_f$ fit together in the
commutative diagram
\begin{equation*}
\CD
F_{(o,U_o),0}  @>>> F_{(o,U_o)} @>>> \pi_1(P_f)
\\
@VVV @VVV @VV{\mathrm{Id}}V
\\
S^1 @>>> \Gamma_f @>>> \pi_1(P_f)
\endCD
\end{equation*}
whose bottom row is a central  extension
\begin{equation}
1 \longrightarrow S^1 \longrightarrow \Gamma_f \longrightarrow
\pi_1(P_f) \longrightarrow 1 \label{centralext2}
\end{equation}
of Lie groups, necessarily split.

The  operation
\begin{equation}
E_f \times F_{(o,U_o)} \longrightarrow E_f
\end{equation}
of juxtaposition relative to the first parameter of $I^2$ of a
diagram of the kind \eqref{CD1} and one of the kind \eqref{CD2}
and subsequent reparametrization induces a $\Gamma_f$-action on
$\widehat P_f$. The projection $\widehat \pi_f\colon E_f \to P_f$
descends to a projection $\widehat \tau_f\colon\widehat P_f \to
P_f$.

Given the map $f \colon M \to N$ with $P_f$ not necessarily being
pathwise connected, we can carry out the construction of $\widehat
\tau_f\colon\widehat P_f \to P_f$ separately for each path
component of $P_f$. The fundamental group $\pi_1(P_f)$ does not
depend on the choice of path component.

\begin{thm} \label{thm1} The projection
$\widehat \tau_f\colon\widehat P_f \to P_f$ is a principal
$\Gamma_f$-bundle, and the data determine a $\Gamma_f$-connection,
with connection form $\omega_{(f,\lambda,\cc)}$ on $\widehat P_f$,
having curvature $\cc_{(f,\lambda,\cc)}$.
\end{thm}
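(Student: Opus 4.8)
===PROOF PROPOSAL===

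The plan is to mirror, almost verbatim, the three-step argument carried out in Section~\ref{difgeo} for the reconstruction of a circle bundle over a space from a closed integral 2-form, now with the contractible path space $P_o(\NN)$ replaced by the contractible string space $E_f$ and the 2-form $c$ replaced by the relative data $(\lambda,\cc)$ assembled into $\cc_{(f,\lambda,\cc)}$ on $P_f$. First I would verify that $\widehat\tau_f\colon\widehat P_f\to P_f$ is a principal $\Gamma_f$-bundle. The group law on $\Gamma_f$ is the juxtaposition-and-reparametrization operation on strings of the kind \eqref{CD2}, already identified with the composition defining $\pi_1(P_f)$; associativity up to reparametrization and the existence of inverses (run the string backwards) are checked exactly as for the fundamental group, so $\Gamma_f$ is a group and \eqref{centralext2} is a central extension. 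The $\Gamma_f$-action on $\widehat P_f$ is free because two strings $(w,\phi)$ and $(w,\phi)\cdot\gamma$ differing by a nontrivial element of $\Gamma_f$ cannot be joined by a homotopy \eqref{CD11} with integral $\int H^*\lambda-\int h^*\cc$: the obstruction is precisely the class of $\gamma$ in $\Gamma_f$. Transitivity on fibers of $\widehat\tau_f$ follows since any two strings over the same point $(q,u)\in P_f$ have the same boundary data on $J^1$ after juxtaposing, and their difference is, by definition, a string of the kind \eqref{CD2}; local triviality comes from the contractibility of $E_f$ together with local sections of $\widehat\pi_f\colon E_f\to P_f$ obtained from a local lifting function for the fibration $\widehat\pi_f$ (recall $\widehat\pi_f$ is a fibration since $E_f$ is contractible and $P_f$ is pathwise connected).

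Next I would produce the connection form. As in \eqref{2form}, let $\ETA$ be the integration-along-paths homotopy operator on $\mathcal A^*(P_{f(o)}(N))$, and set $\beta_\lambda=\ETA(\pi_{f(o)}^*\lambda)$, so that $d\beta_\lambda=\pi_{f(o)}^*\lambda$. Pulling back along the evident evaluation map $E_f\to P_{f(o)}(N)$ (send a string $(w,\phi)$ to the path $s\mapsto\phi(1,s)$, or more precisely use a second homotopy operator integrating over the square parameter of $\phi$) and subtracting $\pi_f^*\cc$ produces a 1-form $\vartheta_{(f,\lambda,\cc)}$ on $E_f$ whose differential is the pullback of $\cc_{(f,\lambda,\cc)}$ from $P_f$. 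Concretely, applying the square-integration homotopy operator to $\phi^*\lambda$ and correcting by $\ETA$ applied to $w^*\cc$ gives a 1-form on $E_f$ that integrates the relative form $\cc_{(f,\lambda,\cc)}$ along strings; by construction its periods over the fibers of $\widehat\tau_f$ land in $\mathbb Z$, so it descends modulo $\mathbb Z$ to an $\mathbb R/\mathbb Z$-valued, i.e.\ $\Gamma_f$-valued when combined with the $S^1$ in \eqref{centralext2}, connection form $\omega_{(f,\lambda,\cc)}$ on $\widehat P_f$. One then checks $\Gamma_f$-equivariance of this form (it transforms by the Maurer--Cartan form of $\Gamma_f$ under the action, because the action appends a string of the kind \eqref{CD2} and the 1-form was built additively from integrals) and that it reproduces the correct values on fundamental vector fields of the $S^1\subseteq\Gamma_f$ action, which follows from $d\ETA+\ETA d=\mathrm{Id}$ exactly as in Section~\ref{difgeo}.

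Finally, the curvature computation: $d\omega_{(f,\lambda,\cc)}=\widehat\tau_f^*\cc_{(f,\lambda,\cc)}$ because $d\vartheta_{(f,\lambda,\cc)}$ equals the pullback of $j_f^*\beta_\lambda-\pi_f^*\cc=\cc_{(f,\lambda,\cc)}$ on $E_f$, and this identity descends to $\widehat P_f$; that $\cc_{(f,\lambda,\cc)}$ is closed with integral periods is already established in the text preceding the theorem. The main obstacle, and the point requiring genuine care rather than bookkeeping, is the well-definedness and smoothness of $\omega_{(f,\lambda,\cc)}$ as a form on the differentiable space $\widehat P_f$: one must show the 1-form on $E_f$ is basic for the projection to $\widehat P_f$ in the appropriate diffeological sense, i.e.\ that it is invariant under the equivalence relation defining $\widehat P_f$ up to the $\mathbb Z$-ambiguity and annihilates vertical directions of the quotient, and that the resulting object is genuinely a connection 1-form for the $\Gamma_f$-bundle structure (horizontal distribution complementary to the orbits). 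This is handled by the same integral-periods argument that makes the equivalence relation well defined, combined with the standard fact that $\ETA$ behaves well under the smoothing and reparametrization maps; once this is in place, the remaining verifications are routine transcriptions of Section~\ref{difgeo}.
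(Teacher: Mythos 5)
Your proposal is correct and follows essentially the same route as the paper: the paper's own proof simply notes that $E_f$ is contractible and that the construction of Section \ref{sect1} (reconstructing a principal bundle with connection from its curvature, carried out there over $P_o(P_f)$) goes through verbatim with $E_f$ in place of $P_o(P_f)$. You have merely written out explicitly the steps that reference leaves implicit—freeness and fiberwise transitivity of the $\Gamma_f$-action, descent of the integrated $1$-form to a connection form, and the curvature identity via $d\ETA+\ETA d=\mathrm{Id}$.
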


\begin{proof} We noted above that (each path component of)
the space $E_f$  is contractible. The construction of the
principal bundle $\widehat \tau_f$ is essentially the same as that
of the principal bundle \eqref{pb} for $\NN=P_f$ in the previous
section, but now carried out with the space $E_f$ rather than with
the space $P_o(P_f)$.
\end{proof}

Now, let $\sigma\colon \Gamma_f \to S^1$ be a splitting of
\eqref{centralext2}. Then the induced principal $S^1$-bundle
$\tau_{\sigma}=\sigma_*(\widehat \tau_f)\colon S_{\sigma,f} \to
P_f$ with connection $\omega_{\sigma,f,\lambda,\cc}=
\sigma_*(\omega_{(f,\lambda,\cc)})$ has curvature
$\cc_{(f,\lambda,\cc)}$. 
Thus we obtain the following.

\begin{cor}\label{cor}
The group
$\mathrm{Hom}(\pi_1(P_f),S^1)=\mathrm{Hom}(\pi_2(f),S^1)$ 
acts simply transitively on the
isomorphism classes of principal $S^1$-bundles with connection on
$P_f$ having curvature $\cc_{(f,\lambda,\cc)}$.
\end{cor}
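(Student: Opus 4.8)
The plan is to deduce Corollary~\ref{cor} from Theorem~\ref{thm1} together with the general principle already established in Section~\ref{sect1}, namely that once we have produced the ``universal'' central extension bundle $\widehat\tau_f\colon\widehat P_f\to P_f$ carrying a tautological $\Gamma_f$-connection of the prescribed curvature, every principal $S^1$-bundle with connection of that curvature is obtained by pushing forward along a splitting $\sigma\colon\Gamma_f\to S^1$ of the central extension \eqref{centralext2}, and two splittings differing by a homomorphism $\pi_1(P_f)\to S^1$ give push-forwards that are isomorphic as $S^1$-bundles with connection. Concretely, I would argue in three steps.

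First, existence and the transitive part. Given any principal $S^1$-bundle $\tau\colon S\to P_f$ with connection $\omega$ of curvature $\cc_{(f,\lambda,\cc)}$, horizontal lift of the strings \eqref{CD1} relative to $\omega$ defines, exactly as in Section~\ref{difgeo} (with $E_f$ in place of $P_o(P_f)$), a morphism of principal bundles from $\widehat\tau_f$ to $\tau$ covering the identity of $P_f$; on structure groups this is a homomorphism $\Gamma_f\to S^1$ restricting to the identity on the central $S^1$, i.e.\ a splitting $\sigma$ of \eqref{centralext2}, and the morphism exhibits $\tau$ (with its connection) as $\sigma_*(\widehat\tau_f)$. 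Hence every bundle-with-connection of the given curvature is in the orbit of the $\mathrm{Hom}(\pi_1(P_f),S^1)$-action, where that group acts on splittings by $\sigma\mapsto\sigma\cdot(\psi\circ p)$ for $\psi\in\mathrm{Hom}(\pi_1(P_f),S^1)$ and $p\colon\Gamma_f\to\pi_1(P_f)$ the projection; this is the transitivity statement. The identification $\pi_1(P_f)\cong\pi_2(f)$ is the standard long-exact-sequence fact for the fibration \eqref{CD0} already recorded in Section~\ref{fiber}.

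Second, simplicity (freeness) of the action. Here I must show that if $\sigma_*(\widehat\tau_f)$ and $\sigma'_*(\widehat\tau_f)$ are isomorphic as $S^1$-bundles \emph{with connection}, then $\sigma$ and $\sigma'$ differ by $\psi\circ p$ for a \emph{unique} $\psi$; uniqueness is immediate since $p$ is surjective, so the content is that a connection-preserving $S^1$-bundle isomorphism forces the structure-group homomorphisms to differ by a character of $\pi_1(P_f)$. An isomorphism of $S^1$-bundles with connection over $P_f$ differing bundles of the same curvature is classified by $H^1(P_f,S^1)$ via the difference of connection forms being exact; but more directly, pulling back the isomorphism through the two morphisms to $\widehat\tau_f$ produces a connection-preserving automorphism of $\widehat\tau_f$ as a $\Gamma_f$-equivariant object over the identity, hence a map $P_f\to S^1$ that is locally constant because it must preserve the tautological connection, i.e.\ a homomorphism $\pi_1(P_f)\to S^1$; chasing this back shows it is exactly $\sigma'\sigma^{-1}$ descended along $p$. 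This is the step I expect to be the main obstacle: one has to be careful that the differentiable-space framework of \cite{chenone} legitimizes ``the difference of two connections of equal curvature is a closed, hence (being $S^1$-valued holonomy data) integrally-periodic, $1$-form, i.e.\ a character of $\pi_1$'' over the infinite-dimensional $P_f$, and that this matches the splitting picture; the honest way is to track everything through the universal object $\widehat\tau_f$ rather than through de~Rham arguments on $P_f$ directly.

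Finally, I would note that by Theorem~\ref{thm1} the extension \eqref{centralext2} admits at least one splitting, so the set of splittings is nonempty and is a torsor over $\mathrm{Hom}(\pi_1(P_f),S^1)$; combined with Steps~1 and~2 this gives the claimed simply transitive action and completes the proof. If desired one can add the remark, parallel to the last Proposition of Section~\ref{sect1}, distinguishing topological equivalence of the underlying $S^1$-bundles from equivalence-with-connection, namely that two such bundles are topologically equivalent iff the corresponding class in $\mathrm{Hom}(\pi_1(P_f),S^1)$ lifts to $\mathrm{Hom}(\pi_1(P_f),\mathbb R)$, but this is not needed for the statement as given.
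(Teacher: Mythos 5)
Your proposal is correct and follows essentially the same route as the paper: the paper derives the corollary directly from the observation that push-forward of $\widehat\tau_f$ along a splitting $\sigma$ of the (necessarily split) central extension \eqref{centralext2} yields an $S^1$-bundle with connection of curvature $\cc_{(f,\lambda,\cc)}$, the set of splittings being a torsor over $\mathrm{Hom}(\pi_1(P_f),S^1)$, exactly as in the model argument of Section~\ref{sect1}. Your Step~2 is stated a little loosely (a locally constant map on the connected space $P_f$ is constant; the point is rather that the holonomy homomorphism $\Gamma_f\to S^1$ of a bundle-with-connection recovers the splitting, so connection-preserving isomorphism forces equality of splittings), but the intended argument via the universal object $\widehat\tau_f$ is sound and is precisely what the paper leaves implicit.
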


The construction 
of the principal bundle $\widehat \tau_f$
applies to the particular case where $M$ is a single
point; we then denote by $\Gamma_\Omega$ the resulting group written above
as $\Gamma_f$, and we write the resulting principal
$\Gamma_\Omega$-bundle on $\Omega_o(\NN)$ in the form
\begin{equation}
\widehat \tau_\Omega\colon\widehat {\Omega_o\NN} \longrightarrow \Omega_o(\NN)
\end{equation}
Since $\pi_1(\Omega_o\NN) \cong \pi_2(\NN)$,  the corresponding
central extension \eqref{centralext2} then takes the form
\begin{equation}
1 \longrightarrow S^1 \longrightarrow \Gamma_\Omega
\longrightarrow \pi_2(\NN) \longrightarrow 1. \label{centralext3}
\end{equation}
This extension necessarily splits whence $\Gamma_\Omega$ is an
abelian group.

Return to a general smooth map $f\colon N \to M$.
Let $\pi_M\colon \widetilde M \to M$ and 
$\pi_N\colon \widetilde N \to N$ be the universal covering projections,
pick a base point $\widetilde o$ of $\widetilde M$ over $o$,
lift the map $f$ to a map $\widetilde f \colon \widetilde M \to \widetilde N$,
and take $\widetilde f(\widetilde o)$ to be the base point of $\widetilde N$.
We mention in passing that $\widetilde f$ is determined by 
the value  $\widetilde f(\widetilde o)$. Any string of the kind \eqref{CD1}
admits a unique lift to a string of the kind
\begin{equation}
\CD \{0\} @>>>I @>{j_1}>> I^2
\\
@VVV @VV{\widetilde w}V @VV{\widetilde \phi}V
\\
\{\widetilde o\} @>>> \widetilde M @>{\widetilde f}>> \widetilde N .
\endCD
\label{CD111}
\end{equation}
Consequently
the map $\pi_f$ lifts to a unique based map
\[
\widetilde \pi_f \colon \widehat P_f \longrightarrow \widetilde M,
\]
and the
various fibrations fit into a commutative diagram of the kind
\begin{equation}
\CD
\Gamma_\Omega@>>> \widehat {\Omega_o\NN} @>{\widehat \tau_\Omega}>> \Omega_o(\NN) \\
 @VVV @VVV @VVV
\\
 \Gamma_f @>>> \widehat P_f @>{\widehat \tau_f}>> P_f
 \\
 @VVV @VV{\widehat \pi_f}V  @VV{\pi_f}V
 \\
\pi_1(M) @>>>\widetilde M @>{\pi_M}>> M .
\endCD
\label{CDfibs}
\end{equation}
Here $\widehat \tau_\Omega$, $\widehat \tau_f$, and
$\pi_M$ are principal fiber bundle projections,
$\widehat \pi_f$ and  $\pi_f$ are fibrations, 
and the two left-hand unlabelled  vertical arrows
constitute an exact sequence of groups.

\section{The equivariant extension}
\label{equiv}
As before, let $\GG$ be a Lie group, and denote its
Lie algebra by $\mathfrak{\gg}$. Suppose that $M$ and $N$ are
$\GG$-spaces, that $f$ is a $\GG$-map, and that the forms
$\lambda$ and $\cc$ are $G$-invariant. Then the spaces
$P_{f(o)}(N)$ and $P_f$ are manifestly $G$-spaces, the commutative
diagram \eqref{CD0} is one of $G$-spaces, the de Rham complexes
$\mathcal A^*(M)$, $\mathcal A^*(N)$, $\mathcal A^*(P_{f(o)}(N))$,
$\mathcal A^*(P_f)$, are $\GG$-complexes, and the 2-form
$\cc_{(f,\lambda,\cc)}$ is manifestly $\GG$-invariant. Thus,
to carry out the construction of the principal $\Gamma_f$-bundle 
$\widehat \tau_f$, cf. Theorem \ref{thm1} above, in a $G$-equivariant manner
by means of the method spelled out in subsection \ref{equivext}
above,
we need a momentum mapping
\begin{equation}
\mu \colon P_f \longrightarrow \mathfrak{\gg}^* .
\end{equation}
The resulting $S^1$-bundles of the kind $\tau_{\sigma}$, cf. 
Corollary \ref{cor}, will then
likewise be $\GG$-equivariant.
We will now explain how such a momentum mapping arises.

For $X\in \mathfrak \gg$, let $X_\NN$ be the associated vector
field on $N$ and, given the vector field $Y$ on $N$, let $i_Y$
denote the familiar operator of contraction with the vector field
$Y$. Further, let $\mathcal A^{2j,k}(\NN)$ denote the space of
degree $j$ polynomial maps on $\mathfrak{\gg}$ with values in
$\mathcal A^k(\NN)$, and let
\[
\delta_{\GG}\colon \mathcal A^{2*,*}(\NN)\longrightarrow \mathcal
A^{2*+2,*-1}(\NN)
\]
be the familiar equivariant operator given by
\[
(\delta_{\GG}(\alpha))(X) =-i_{X_{\NN}}(\alpha(X)).
\]
As before, we denote by $\ETA\colon \mathcal A^*(P_{f(o)}(N)) \to
\mathcal A^{*-1}(P_{f(o)}(N))$ the homotopy operator given by
integration along the paths which constitute the points of
$P_{f(o)}(N)$. This homotopy operator is plainly
$\GG$-equivariant.

\begin{thm} \label{thm2} Given a $\GG$-equivariant linear map
 $\vartheta\colon\mathfrak {\gg} \to \mathcal
A^1(\NN)$  satisfying the identities
\begin{equation}
\delta_{\GG}(\lambda) = -d\vartheta,\ \delta_{\GG}(c) =
f^*(\vartheta),\ \delta_{\GG}(\vartheta)=0,
\end{equation}
$\vartheta$ being viewed as a $\mathfrak{\gg}^*$-valued 1-form on
$\NN$, the map
\begin{equation}\mu_{f,\vartheta}=
-\ETA(p^*_{f(o)}(\vartheta))\circ j_f\colon P_f\to
\mathfrak{\gg}^*
\end{equation}
is a $\GG$-momentum mapping for $\cc_{(f,\lambda,\cc)}$.
\end{thm}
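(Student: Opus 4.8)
The plan is to unravel the definition of the adjoint $\mu_{f,\vartheta}^{\sharp}\colon\mathfrak g\to C^{\infty}(P_f)$, namely $\mu_{f,\vartheta}^{\sharp}(X)=-j_f^{*}\bigl(\ETA(\pi_{f(o)}^{*}(\vartheta(X)))\bigr)$ for $X\in\mathfrak g$, and to verify the two requirements for $\mu_{f,\vartheta}$ to be a momentum mapping for $\cc_{(f,\lambda,\cc)}$ on $P_f$: that $\mu_{f,\vartheta}$ be $\GG$-equivariant, and that $d(\mu_{f,\vartheta}^{\sharp}(X))=i_{X_{P_f}}\cc_{(f,\lambda,\cc)}$ for every $X\in\mathfrak g$. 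Equivariance is immediate from the build-up of $\mu_{f,\vartheta}^{\sharp}$: it is the composite of the $\GG$-equivariant linear map $\vartheta\colon\mathfrak g\to\mathcal A^{1}(\NN)$ with $\pi_{f(o)}^{*}$, with the homotopy operator $\ETA$, and with $j_f^{*}$, and each of these intertwines the pertinent $\GG$-actions, since $\pi_{f(o)}$ and $j_f$ are $\GG$-maps and $\ETA$ is $\GG$-equivariant as already observed; hence $\mu_{f,\vartheta}^{\sharp}$ is a morphism of $\GG$-modules from $\mathfrak g$, with the adjoint action, to $C^{\infty}(P_f)$, which is precisely the assertion that $\mu_{f,\vartheta}$ be $\GG$-equivariant for the coadjoint action on $\mathfrak g^{*}$.

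For the comomentum identity I would begin from the decomposition \eqref{2form2}, $\cc_{(f,\lambda,\cc)}=j_f^{*}(\beta_{\lambda})-\pi_f^{*}(\cc)$. Because $\pi_f$ and $j_f$ are $\GG$-maps, the fundamental vector field $X_{P_f}$ is $\pi_f$-related to $X_M$ and $j_f$-related to $X_{P_{f(o)}(\NN)}$, so contraction passes through the pullbacks:
\[
i_{X_{P_f}}\cc_{(f,\lambda,\cc)}=j_f^{*}\bigl(i_{X_{P_{f(o)}(\NN)}}\beta_{\lambda}\bigr)-\pi_f^{*}\bigl(i_{X_M}\cc\bigr).
\]
The second summand is disposed of by the hypothesis $\delta_{\GG}(\cc)=f^{*}(\vartheta)$, which reads $i_{X_M}\cc=-f^{*}(\vartheta(X))$; together with the relation $f\circ\pi_f=\pi_{f(o)}\circ j_f$ coming from the pull-back square \eqref{CD0}, this yields $\pi_f^{*}(i_{X_M}\cc)=-j_f^{*}(\pi_{f(o)}^{*}(\vartheta(X)))$.

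The crux is the first summand, and here I would isolate the following lemma: for every form $\nu$ on $\NN$,
\[
i_{X_{P_{f(o)}(\NN)}}\bigl(\ETA(\pi_{f(o)}^{*}\nu)\bigr)=-\,\ETA\bigl(\pi_{f(o)}^{*}(i_{X_{\NN}}\nu)\bigr)
\]
on $P_{f(o)}(\NN)$; that is, the homotopy operator $\ETA$ of \eqref{homotopy1}, being integration along the tautological paths, intertwines contraction by the infinitesimal $\GG$-action with that on $\NN$, up to the sign produced by moving the contraction past the path-parameter. The proof is to realise $\ETA$ through the evaluation map $\mathrm{ev}\colon I\times P_{f(o)}(\NN)\to\NN$, $\mathrm{ev}(t,\gamma)=\gamma(t)$, which is a $\GG$-map for the trivial action on $I$, so that $\ETA(\pi_{f(o)}^{*}\nu)$ is represented by $\int_{I}i_{\partial_t}(\mathrm{ev}^{*}\nu)\,dt$; one then uses that $i_{X_{P_{f(o)}(\NN)}}$ commutes with the $t$-integration, that the fundamental vector field of $X$ on $I\times P_{f(o)}(\NN)$, which is $(0,X_{P_{f(o)}(\NN)})$ because $\GG$ acts trivially on $I$, is $\mathrm{ev}$-related to $X_{\NN}$, and that $i_{X_{P_{f(o)}(\NN)}}$ anticommutes with $i_{\partial_t}$. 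Applying the lemma with $\nu=\lambda$ and inserting the hypothesis $\delta_{\GG}(\lambda)=-d\vartheta$, i.e.\ $i_{X_{\NN}}\lambda=d(\vartheta(X))$, gives $i_{X_{P_{f(o)}(\NN)}}\beta_{\lambda}=-\,\ETA\bigl(d(\pi_{f(o)}^{*}(\vartheta(X)))\bigr)$; the homotopy identity \eqref{homotopy2}, applied to the $1$-form $\pi_{f(o)}^{*}(\vartheta(X))$, then converts this into $i_{X_{P_{f(o)}(\NN)}}\beta_{\lambda}=-\,\pi_{f(o)}^{*}(\vartheta(X))+d\,\ETA(\pi_{f(o)}^{*}(\vartheta(X)))$. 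Substituting both computed summands back, the two copies of $j_f^{*}\pi_{f(o)}^{*}(\vartheta(X))$ cancel and one is left with $i_{X_{P_f}}\cc_{(f,\lambda,\cc)}=d(\mu_{f,\vartheta}^{\sharp}(X))$, the overall sign being determined by the conventions already in force for $\delta_{\GG}$, for fundamental vector fields, and for $\ETA$. The remaining hypothesis $\delta_{\GG}(\vartheta)=0$ is not needed for this argument; it is what makes $\vartheta$ a genuine equivariant primitive of $\lambda$, and it enters only the complementary statement that the relative pair $(\cc_{(f,\lambda,\cc)},\lambda)$, together with $(\mu_{f,\vartheta},\vartheta)$, is equivariantly closed.

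I expect the one genuinely non-formal point, and hence the main obstacle, to be the lemma of the preceding paragraph: one must make rigorous, within the calculus of differential forms on the diffeological path space $P_{f(o)}(\NN)$ in the sense of \cite{chenone}, the assertion that the homotopy operator $\ETA$ of \eqref{homotopy1} intertwines contraction by the infinitesimal $\GG$-action with that on $\NN$, together with its sign. Granted this, the rest is a matter of assembling \eqref{CD0}, \eqref{2form2}, \eqref{homotopy2}, and the first two identities imposed on $\vartheta$.
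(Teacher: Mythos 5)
Your proof is correct and follows essentially the same route as the paper's: the same decomposition $\cc_{(f,\lambda,\cc)}=j_f^{*}\beta_{\lambda}-\pi_f^{*}\cc$, the hypothesis on $\delta_{\GG}(c)$ together with the pull-back square for the second summand, and the homotopy identity $d\ETA+\ETA d=\mathrm{Id}$ for the first, the only cosmetic difference being that you work with the contractions $i_{X}$ while the paper phrases the computation in terms of $\delta_{\GG}$. Your ``key lemma'' that $\ETA$ intertwines $i_{X_{P_{f(o)}(\NN)}}$ with $i_{X_{\NN}}$ (up to sign) is exactly the step the paper uses silently in the line $\ETA(\pi^{*}_{f(o)}(\delta_{\GG}\lambda))=\delta_{\GG}(\ETA(\pi^{*}_{f(o)}\lambda))$, so making it explicit (and noting that the overall sign, like the paper's own explicit restatement of the hypotheses on $\vartheta$, depends on the sign convention for $\delta_{\GG}$) is a point in your favour rather than a deviation.
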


\begin{proof}
The conditions on $\vartheta$ say that, somewhat more explicitly,
given $X\in \mathfrak{\gg}$ and vector fields $Y$ and $Z$ on
$\NN$,
\[
\lambda(X_{\NN},Y,Z) = -d(\vartheta(X))(Y,Z), \ c(X_{\NN},Y)
=f^*(\vartheta(X))(Y) .
\]
Then
\begin{equation*}
\begin{aligned} \delta_{\GG}(\cc_{(f,\lambda,\cc)}) &=
j_f^*\delta_{\GG}(\beta_{\lambda}) -\pi_f^*(\delta_{\GG}(c))\\
&=j_f^*\delta_{\GG}(\beta_{\lambda}) -\pi_f^*(f^*\vartheta)
\\
&=j_f^*\delta_{\GG}(\beta_{\lambda})
-j_f^*(\pi^*_{f_{(o)}}\vartheta)
\\
&=j_f^*(\delta_{\GG}(\beta_{\lambda}) -\pi^*_{f_{(o)}}\vartheta)
\end{aligned}
\end{equation*}
View $\vartheta$ as a $\mathfrak{\gg}^*$-valued 1-form on $\NN$
and let $\mu_{\vartheta}=\ETA(-\pi^*_{f(o)}(\vartheta))\colon
P_o(\NN) \to \mathfrak{\gg}^*$ be the $\mathfrak{\gg}^*$-valued
function on $P_o(\NN)$, necessarily $\GG$-equivariant, which
arises by integration of $-\pi^*_{f(o)}(\vartheta) \in \mathcal
A^1(P_o(\NN),\mathfrak{\gg}^*)$. Then
\begin{align*}
\pi^*_{f(o)}(\vartheta)&=d\ETA(\pi^*_{f(o)}(\vartheta))
+\ETA d(\pi^*_{f(o)}(\vartheta)) \\
&= -d\mu_{\vartheta} +\ETA(\pi^*_{f(o)}(d\vartheta)) \\
&= -d\mu_{\vartheta} +\ETA(\pi^*_{f(o)}(\delta_{\GG}(\lambda))
\\
&= -d\mu_{\vartheta} +\delta_{\GG}(\ETA(\pi^*_{f(o)}(\lambda)))
\\
&= -d\mu_{\vartheta} +\delta_{\GG}(\beta_{\lambda})
\end{align*}
whence
\[
d\mu_{\vartheta}= \delta_{\GG}(\beta_{\lambda}) -
\pi^*_{f(o)}(\vartheta).
\]
By definition, $\mu_{f,\vartheta}= \mu_{\vartheta}\circ j_f\colon
P_f\to \mathfrak{\gg}^*$. This map is $\GG$-equivariant. Moreover,
by construction,
\[
d\mu_{f,\vartheta} =j_f^*(\delta_{\GG}(\beta_{\lambda})
-\pi^*_{f_{(o)}}\vartheta) = \delta_{\GG}(\cc_{(f,\lambda,\cc)}),
\]
that is, $\mu_{f,\vartheta}$ is a $\GG$-momentum mapping for
$\cc_{(f,\lambda,\cc)}$.
\end{proof}

Thus, when $\GG$ is connected, as explained in the previous
section, the momentum mapping $\mu_{f,\vartheta}$ furnishes a lift
of the $\GG$-action to $\widehat P_f$ turning
\[
 \widehat \tau_f \colon \widehat P_f \to
P_f,\omega_{f,\lambda,c}
\]
into a $\GG$-equivariant principal
$\Gamma_f$-bundle with connection.

Now, let $\sigma\colon \Gamma_f \to S^1$ be a splitting of
\eqref{centralext2}. Then the induced principal $S^1$-bundle
$\tau_{\sigma}=\sigma_*(\widehat \tau_f)\colon S_{\sigma,f} \to
P_f$ with connection $\omega_{\sigma,f,\lambda,\cc}=
\sigma_*(\omega_{(f,\lambda,\cc)})$ is a bundle in the category of
$\GG$-spaces and has curvature $\cc_{(f,\lambda,\cc)}$. In
particular, the group $\mathrm{Hom}(\pi_1(P_f),S^1)$ acts simply
transitively on the isomorphism classes of $\GG$-equivariant
principal $S^1$-bundles with connection on $P_f$ having curvature
$\cc_{(f,\lambda,\cc)}$.

\section{The case where the target is a Lie group}\label{liegp}

Let $\HH$ be a Lie group and let $\hh$ denote its Lie algebra.
View $\HH$ as an $\HH$-group via conjugation. Let $\,\cdot\,$ be
an invariant symmetric bilinear form on $\hh$, for the moment
neither necessarily non-degenerate nor positive. Let
$\omega_{\HH}$ denote the left-invariant $\hh$-valued
Maurer-Cartan form on $\HH$ and let $\overline \omega_{\HH}$ be
the right-invariant $\hh$-valued Maurer-Cartan form on $\HH$. Let
\[
\lambda = \frac 1{12}[\omega_{\HH},\omega_{\HH}]\cdot
\omega_{\HH}.
\]
This is a closed bi-invariant 3-form on $\HH$.

As before, we denote by $P_e(\HH)$ the space of piecewise smooth
paths in $\HH$ starting at the neutral element $e$ of $\HH$. Let
\[
\vartheta^{\flat}=\frac 12 (\omega_{\HH}+ \overline \omega_{\HH})
\in \mathcal A^1(\HH,\hh).
\]
The resulting map
\[
\ETA(\vartheta^{\flat})\colon P_e(\HH) \longrightarrow\hh
\]
coming from integration sends the piecewise smooth path  $u\colon
I \to \HH$ with $u(0) = e$ to
\[
\ETA(\vartheta^{\flat})(u) = \frac 12 \int_0^1 (u^{-1} \dot u +
\dot u u^{-1}) dt \in \hh.
\]
Notice that when $u$ is the 1-parameter subgroup generated by
$Y\in \hh$ the value $\ETA(\vartheta^{\flat})(u)$ is just $Y$,
that is, the composite of $\ETA(\vartheta^{\flat})$ with the
canonical injection of $\hh$ into $P_e(\HH)$ is the identity of
$\hh$.

Let $\vartheta \in \mathcal A^1(\HH,\hh^*)$ be the $\hh^*$-valued
1-form on $\HH$ which arises from combination of
$\vartheta^{\flat}$ with the adjoint $\hh \to \hh^*$ of the given
invariant symmetric bilinear form. Then
\[
\delta_{\HH} \vartheta = 0, \quad\delta_{\HH}\lambda = -
d\vartheta,
\]
cf. \cite{weinsthi} (4.1), (4.3). It is well known that, when
$\HH$ is compact, an invariant inner product on $\hh$ exists that
is even positive definite, and the resulting 3-form $\lambda$,
occasionally referred to in the literature as the {\em Cartan\/}
3-form, is well known to have integral periods. Thus Theorems
\ref{thm1} and \ref{thm2} apply to maps of the kind $f \colon M
\to \HH$, with $f(o)=e$ (which can alwas be arranged for), under
suitable circumstances. In the remaining sections we spell out
a number of examples.

In the present situation where the target of the map $f$ is a Lie
group, more structure is available: The spaces $P_e(\HH)$ and
$\Omega_e(\HH)$ inherit group structures, and the projection
$\pi_e\colon P_e(\HH)\to \HH$ is a homomorphism whence this
projection is, in particular, a principal fiber bundle with
structure group $\Omega_e(\HH)$. Consequently $\pi_f \colon P_f
\to M$ is necessarily a principal fiber bundle with structure
group $\Omega_e(\HH)$.

Suppose that $\HH$ is connected. Since $\pi_2(\HH)$ is zero, the
extension \eqref{centralext3} comes down to an isomorphism $S^1
\to \Gamma_\Omega$, and the diagram \eqref{CDfibs} takes the form
\begin{equation}
\CD
S^1@>>> \widehat {\Omega_e(\HH)} @>>> \Omega_e(\HH) \\
 @VVV @VVV @VVV
\\
 \Gamma_f @>>> \widehat P_f @>>> P_f
 \\
 @VVV @VVV @VVV
 \\
\pi_1(M) @>>>\widetilde M @>>> M .
\endCD
\label{CDfibs2}
\end{equation}
When $\widehat {\Omega_e(\HH)}$ acquires a group structure--this
will always be so when $\HH$ is simply connected--the resulting
projection $\widehat P_f \to \widetilde M$ is likewise a principal
fiber bundle, with structure group $\widehat{\Omega_e(\HH)}$.
The top row of the diagram
\eqref{CDfibs2}
is then the universal central extension of
the based loop group $\Omega_e(\HH)$ of $H$; 
cf. \cite{lomonesh} and the literature there.

\section{Application to moduli spaces}\label{moduli}

Let $\Sigma$ be a closed surface of genus $\ell$, let $\KK$ be a
compact connected Lie group, let $\,\cdot\,$ be a positive
definite invariant symmetric bilinear form on the Lie algebra
$\kk$ of $K$, and let
\[
\mathcal P=\langle x_1,y_1,\ldots, x_\ell, y_\ell; r\rangle, \ r =
\prod[x_j,y_j],
\]
be the familiar presentation of the fundamental group
$\pi_1(\Sigma)$ of $\Sigma$. The relator $r$ induces a relator map
\[
r\colon\KK ^{2\ell} \longrightarrow \KK.
\]
Endow $\KK$ and $\KK ^{2\ell}$ with the $\KK$-action given by
conjugation in $\KK$. Then $r$ is plainly $\KK$-equivariant. The
construction in Section 2 of \cite{modus} yields a $\KK$-invariant
2-form $\cc$ on $M=\KK ^{2\ell}$ (written there as $\omega_c$)
such that
\[
d\cc = r^*\lambda,
\]
cf. \cite{modus} (18). The pair $(\cc,\lambda)$ arises from a
certain form of total degree 4 on the simplicial model for the
classifying space $B\KK$ of $\KK$ which, in turn, has been
constructed by Bott \cite{bottone}, Dupont \cite{duponone} and
Shulman \cite{shulmone}, and this form on $B\KK$ represents the
universal {\em Pontrjagin\/} class and hence has integral periods.
Consequently the pair $(\cc,\lambda)$ has integral periods in our
sense.

The map between the fundamental groups induced by the relator map
is trivial. Hence a choice of central element $z$ of the universal
covering group $\widetilde \KK$ determines a lift
\[
r_z \colon \KK ^{2\ell} \longrightarrow \widetilde \KK.
\]
The fiber $P_{r_z}$ is connected, even simply connected, since
$\pi_2(\KK)$ is zero and, as $z$ ranges over the center of
$\widetilde \KK$ or, equivalently, over the fundamental group of
$\KK$, the spaces $P_{r_z}$ range over the path components of the
fiber $P_r$ of the original relator map $r$. Furthermore, $P_r$ is
a $\KK$-space in an obvious fashion. 
We can thus take $f$ to be any of the maps $r_z$ 
as $z$ ranges over the center of
$\widetilde \KK$ and apply Theorems \ref{thm1} and \ref{thm2}
with this choice of $f$.

Thus, let $\cc_{(r,\lambda,\cc)}$ be the closed $\KK$-invariant 2-form
\eqref{2form}, constructed separately on each path component of
$P_f$ of the kind $P_{r_z}$. 
This form has integral periods and is necessarily $\KK$-invariant.
Theorem \ref{thm1} exploited separately for each path component of
$P_r$ yields the principal $S^1$-bundle $\tau_r\colon S \to P_r$
together with the connection 1-form $\omega_{(r,\lambda,\cc)}$
having curvature $\cc_{(r,\lambda,\cc)}$ and, since each path
component of $P_r$ is simply connected, this $S^1$-bundle with
connection is uniquely determined by the data up to gauge
transformations. Moreover, 
let
\[
\vartheta \in \mathcal A^1(\KK, \kk^*)\cong \mathcal A^{2,1}(\KK)
\]
be the form introduced in the previous section, where
now $\KK$ is substituted for $H$;
it can be shown that
\[
\delta_{\KK}(\cc)=r^*(\vartheta) \in \mathcal A^{2,1}(\KK^{2\ell}).
\]
Theorem \ref{thm2}, exploited separately for each path component
of $P_r$  of the kind $P_{r_z}$,
 yields a momentum mapping
$\mu_{f,\vartheta}\colon P_r \to \kk^*$ and hence a lift of the
$\KK$-action to a $\widetilde \KK$-action on the total space $S$
of $\tau_r$ turning $\tau_r$ into a $\widetilde \KK$-equivariant
principal $S^1$-bundle with connection. The construction is
entirely rigid and natural in terms of the data.

The extended moduli space $\mathcal H(\mathcal P,\KK)$ constructed
in \cite{modus} embeds  $\KK$-equi\-var\-iantly into $P_r$ in a
canonical manner. The composite of the injection with the momentum
mapping $\mu_{f,\vartheta}$ furnishes the momentum mapping
$\mu^{\sharp}$ constructed in \cite{modus}, and the 2-form
$\cc_{(r,\lambda,\cc)}$ restricts to the 2-form denoted in
\cite{modus} by $\omega_{c,\mathcal P}$. Thus the present
construction recovers the extended moduli space.  {\em New
insight\/} is provided by the functorial construction of the  {\em
principal $S^1$-bundle\/} $\tau_r$: {\em This $S^1$-bundle
restricts to a $\widetilde \KK$-equivariant principal $S^1$-bundle
on $\mathcal H(\mathcal P,\KK)$ having Chern class
$[\omega_{c,\mathcal P}]$ and, in particular, the construction
furnishes, in a functorial manner, a connection having
curvature\/} $\omega_{c,\mathcal P}$.

In a neighborhood of the zero locus of the momentum mapping,
written in \cite{modus} as $\mathcal M(\mathcal P,\KK)$, the
2-form $\omega_{c,\mathcal P}$ is symplectic, and symplectic
reduction yields the corresponding moduli spaces of (possibly)
twisted representations of $\pi_1(\Sigma)$ in $\KK$ \cite{modus}
or, equivalently, the corresponding moduli spaces of central
Yang-Mills connections on $\Sigma$. Thus, {\em reduction carries
the principal $S^1$-bundle to an object which serves as a
replacement for the (in general missing) principal $S^1$-bundle on
the moduli space.\/} When such a bundle exists, the corresponding
line bundle is referred to in the literature as a {\em
Poincar\'e\/} bundle.

On an open and dense stratum, the reduced object is an ordinary
principal $S^1$-bundle, though.

\section{A geometric object realizing the first Pontrjagin class}\label{pont}

Let $o$ be a suitably
chosen base point for $\Sigma$, see below. Under the circumstances
of the previous section, we will construct a $\KK$-equivariant
principal $S^1$-bundle on the space $\mathrm{Map}^o(\Sigma,B\KK)$
of based maps from $\Sigma$ to $B\KK$. The group $\KK$ being
supposed connected, each path component of this space corresponds
to a principal $\KK$-bundle on $\Sigma$ and in fact amounts to the
classifying space of the group of based gauge transformations of
that principal $\KK$-bundle \cite{atibottw}. The space
$\mathrm{Map}^o(\Sigma,B\KK)$ is based homotopy equivalent to
the space $P_r$ in the previous section.

A choice of lifting function for the universal $\KK$-bundle induces
a topological holonomy  $\Omega B \KK \to \KK$. 
Alternatively, 
when $B\KK$ is taken to be the realization of the nerve $N\KK$ of $\KK$,
in each simplicial degree $p$, a suitable connection on the 
corresponding principal $\KK$-bundle on $N_p\KK$
induces a $\KK$-valued holonomy map $\Omega N_p\KK \to \KK$, 
and these combine to a holonomy map $\Omega B\KK \to \KK$.

Let $B^2$ be
the unit 2-disk, identify the circle $S^1$ with the boundary of
$B^2$ in the standard fashion, take $1\in S^1$ as base point of
$S^1$ and $B^2$, let $\vee_{2\ell} S^1$ be the ordinary
one-point union of $2\ell$ copies of the circle $S^1$, let
$F\colon S^1 \to \vee_{2\ell} S^1$ be a map whose cofiber
furnishes the surface $\Sigma$ and,
 for convenience, as base point $o$ for
$\Sigma$ we take the obvious base point of $\vee_{2\ell} S^1$,
viewed as a subspace of $\Sigma$. Thus these spaces fit into the
pushout diagram
\begin{equation}
\begin{CD} S^1 @>>> B^2
\\
@VFVV @VVV
\\
\vee_{2\ell} S^1 @>>> \Sigma
\end{CD}
\label{CD21}
\end{equation}
which we will refer to as a {\em geometric presentation\/} of
$\Sigma$. Take
\[
M= (\Omega B \KK)^{2\ell}=\mathrm{Map}^o(\vee_{2\ell} S^1,B
\KK), \ N= \Omega B \KK=\mathrm{Map}^o(S^1,B \KK),
\]
and let
\[
f=F^*\colon \mathrm{Map}^o(\vee_{2\ell} S^1,B
\KK) \longrightarrow  \mathrm{Map}^o(S^1,B \KK)
\]
be the induced map. Then the fiber $P_f$ of the map $f$ coincides
with the space $\mathrm{Map}^o(\Sigma,B\KK)$. In particular, when
the map $F$ is suitably chosen, the holonomy induces a map from
$\mathrm{Map}^o(\Sigma,B\KK)$ to the space $P_r$ in the previous
section, and this map furnishes the asserted homotopy equivalence.
Abusing notation, let $\lambda\in \mathcal A^3(N)$ be the 3-form
induced from the Cartan 3-form in $\mathcal A^3(\KK)$ via the
holonomy, and let $\cc\in \mathcal A^2(M)$ be the 2-form induced,
via the holonomy, from the 2-form in $\mathcal A^2(\KK^{2\ell})$
denoted by the same symbol. 
Moreover, 
let
\[
\vartheta \in 
\mathcal A^{2,1}(N)
\]
be the form induced, via the holonomy, from the corresponding form in
$\mathcal A^1(\KK, \kk^*)\cong \mathcal A^{2,1}(\KK)$
introduced in Section \ref{liegp} and exploited already
in the previous section
and denoted by the same symbol.
The
construction given in Theorem \ref{thm2},
applied 
separately to each path component of $\mathrm{Map}^o(\Sigma,B\KK)$,
yields a $\widetilde \KK$-equivariant principal $S^1$-bundle 
$\mathrm{Map}^o(\Sigma,B\KK)$ with
connection  having curvature $\zeta_{(f,\zeta,\lambda)}$. The construction is
natural in terms of the data.

This approach relies on the integrality of the fundamental degree
four class, the {\em Pontrjagin\/} class,  of the classifying
space $B\KK$. Our procedure assigns a $\widetilde \KK$-equivariant
principal $S^1$-bundle over $\mathrm{Map}^o(\Sigma,B\KK)$ to {\em
every closed surface\/} $\Sigma$; this assignment is functorial in
terms of geometric presentations of the kind \eqref{CD21} and can
thus be viewed as an {\em alternative\/} to the {\em equivariant
gerbe\/} representing the Pontrjagin class; see \cite{brymcboo}
for the latter.

\section{Coadjoint orbits of the loop group}

Let $M$ be a conjugacy class in $\KK$, let $N=\KK$, and let
$f\colon M \to N$ be the inclusion. There is a 2-form $\cc\in
\mathcal A^2(M)$ 
such that the pair
$(\lambda,\cc)$ has integral periods \cite{guhujewe}. The fiber $P_f$ may be
viewed as a coadjoint orbit of the loop group $\mathcal L\KK =
\mathrm{Map}(S^1,\KK)$. All the requisite requirements are met:
Theorem  \ref{thm1} furnishes a principal $S^1$-bundle with
connection on this coadjoint orbit having 
curvature $\zeta_{(f,\zeta,\lambda)}$ and, when the additional ingredient 
$\vartheta\in \mathcal A^{2,1}(\KK)$  introduced in Section \ref{liegp} 
above is added,
Theorem \ref{thm2} applies equally and yields the
structure of a $\widetilde \KK$-equivariant principal $S^1$-bundle
on $P_f$.

\end{document}